\theoremstyle{definition}
\newtheorem{theorem}{Theorem}
\newtheorem{lemma}{Lemma}
\newtheorem{assumption}{Assumption}
\theoremstyle{definition}
\newtheorem{remark}{Remark}
\newcommand{\vertiii}[1]{{\left\vert\kern-0.2ex\left\vert\kern-0.2ex\left\vert #1 
    \right\vert\kern-0.2ex\right\vert\kern-0.2ex\right\vert}}
\definecolor{light-gray}{gray}{0.95}
\begin{document}
\begin{frontmatter}

\title{Variational multiscale modeling with discretely divergence-free subscales}

\author[colorado]{John~A.~Evans\corref{cor1}}
\ead{john.a.evans@colorado.edu}
\cortext[cor1]{Corresponding author}
\author[ucsd]{David~Kamensky}
\author[brown]{Yuri~Bazilevs}

\address[colorado]{Department of Aerospace Engineering Sciences, University of Colorado at Boulder, 429 UCB, Boulder, CO 80309, USA}
\address[ucsd]{Department of Mechanical and Aerospace Engineering, University of California, San Diego, 9500 Gilman Drive, Mail Code 0411, La Jolla, CA 92093, USA}
\address[brown]{School of Engineering, Brown University, 184 Hope St., Providence, RI 02912, USA}

\journal{Computers \& Mathematics with Applications}

\begin{abstract}
We introduce a residual-based stabilized formulation for incompressible Navier--Stokes flow that maintains discrete (and, for divergence-conforming methods, strong) mass conservation for inf-sup stable spaces with $H^1$-conforming pressure approximation, while providing optimal convergence in the diffusive regime, robustness in the advective regime, and energetic stability.  The method is formally derived using the variational multiscale (VMS) concept, but with a discrete fine-scale pressure field which is solved for alongside the coarse-scale unknowns such that the coarse and fine scale velocities separately satisfy discrete mass conservation.  We show energetic stability for the full Navier--Stokes problem, and we prove convergence and robustness for a linearized model (Oseen flow), under the assumption of a divergence-conforming discretization.  Numerical results indicate that all properties extend to the fully nonlinear case and that the proposed formulation can serve to model unresolved turbulence.
\end{abstract}

\begin{keyword}
Variational multiscale analysis \sep
Stabilized methods \sep
Mixed methods \sep
Incompressible Navier--Stokes equations \sep
Divergence-conforming discretizations \sep Isogeometric analysis
\end{keyword}

\end{frontmatter}

\section{Introduction}\label{sec:introduction}
Over the past two decades, a substantial amount of evidence has been published to suggest that residual-based stabilized formulations for incompressible flow are effective as implicit large-eddy simulation (LES) models, both in fundamental turbulence studies \cite{Hoffman2006,Bazilevs07b,Bazilevs09d} and industrial-scale applications \cite{Bazilevs10a,Bazilevs10b,Hoffman2016}.  The connection between residual-based stabilization and LES was illuminated by variational multiscale (VMS) theory \cite{Hug98,HugScoFr04,masud2006multiscale,Hughes07b}, in which unresolved fine-scale solution components are driven by the residual from introducing the resolved coarse-scale solution into the partial differential equation (PDE) system.

A shortcoming of residual-based stabilized formulations is that they typically disrupt the discrete continuity equation: the velocity divergence integrated against an arbitrary member of some pressure test space is no longer guaranteed to be zero, as it would be in Galerkin's method.  This is true, for instance, of the popular streamline upwind Petrov--Galerkin (SUPG)/pressure-stabilized Petrov--Galerkin (PSPG) \cite{brooks1982streamline,Hughes86a} method, the Galerkin Least Squares (GLS) method \cite{hughes1989new}, and the residual-based VMS formulation of Bazilevs et. al \cite{Bazilevs07b}.  In discretizations for which the pressure test space contains the divergence of every velocity trial solution, i.e., {\em divergence-conforming} discretizations, discrete mass conservation implies strong (pointwise) mass conservation \cite{John2017}.  Although loss of exact discrete and/or strong mass conservation is inconsequential for many applications, as demonstrated by our literature review, it is potentially-catastrophic in certain scenarios.  For instance, mass conservation is considered to be of prime importance for coupled flow-transport \cite{matthies07} and ``high pressure, low flow'' problems \cite{gerbeau97}.  Moreover, residual-based stabilization methods resembling the PSPG approach can fail spectacularly in conjunction with immersed boundary methods, especially when there is a large pressure jump across the immersed boundary \cite[Section 4.4.1]{Kamensky2015}.  The loss of mass conservation leads to a substantial effective leakage of fluid through boundaries, severely disrupting even the qualitative solution features.  This pathology can be avoided by using Galerkin's method with divergence-conforming function spaces \cite{Kamensky2017a,Casquero2018}, but at the cost of losing stability for high Reynolds numbers, leading \cite{Kamensky2017a} to resort to a low-order artificial diffusion in realistic problems.

To recover discrete mass conservation in existing residual-based stabilized formulations, it is possible to ignore pressure stabilization terms.  This is popular in practice when inf-sup stable velocity/pressure pairs are employed.  However, this usually comes with a loss of stability.  For instance, the reduced SUPG method which results from ignoring PSPG terms in the SUPG/PSPG method is not coercive with respect to an SUPG/PSPG-like norm.  As a consequence, an error analysis which shows robustness in the advection-dominated limit is missing for the reduced SUPG method \cite{gelhard2005stabilized}.  One can also improve mass conservation by employing grad-div stabilization with a large stabilization parameter, but this commonly results in locking of the velocity field and pressure oscillations except in specialized situations \cite{jenkins2014parameter}.

The present paper introduces a novel residual-based stabilization that can maintain discrete (and, for divergence-conforming methods, strong) mass conservation for inf-sup stable spaces with $H^1$-conforming pressure approximation, while providing optimal convergence in the diffusive regime, robustness in the advective regime, and energetic stability.  The gist of this method is to apply the VMS formalism using a Stokes projector for scale separation, as was recently suggested in \cite{ten2018correct}.  This yields a nonlinear algebraic system in which a discrete fine-scale pressure field must be solved for alongside the coarse-scale unknowns, such that the coarse- and fine-scale velocities separately satisfy discrete mass conservation.  Section \ref{sec:formulation} develops this VMS-based formulation for the Navier--Stokes problem and shows the formulation is energetically stable.  Section \ref{sec:oseen-conv} proves the convergence of the method, when applied to the linearized Navier--Stokes (Oseen) equations, assuming divergence-conforming discrete spaces.  Section \ref{sec:num-examples} demonstrates numerically that the convergence analysis can be extrapolated to the full Navier--Stokes problem and shows some initial results indicating that the proposed VMS-based formulation is effective as an LES methodology for turbulent incompressible flow.  We summarize our findings and outline future extensions of this work in Section \ref{sec:conclusions}.  


\section{Formulation for the Incompressible Navier--Stokes problem}\label{sec:formulation}
We begin by deriving our VMS-based formulation for the incompressible Navier--Stokes problem and showing that it is energetically stable.  We then show how to implement our method by statically condensing out the fine-scale velocity field after time discretization, and we present a simplified version of our method based on the concept of quasi-static subscales.
\subsection{Problem and notation}
The strong form of the incompressible Navier--Stokes problem with homogeneous Dirichlet boundary conditions and a Newtonian viscosity law is:  
$$
(S) \left\{ \hspace{5pt}
\parbox{6in}{
\noindent Find $\mathbf{u}:\overline{\Omega}\times\lbrack 0,T\rbrack\to\mathbb{R}^d$ and $p:\overline{\Omega}\times(0,T)\to\mathbb{R}$ such that
\begin{equation}
    \begin{array}{rclr}\partial_t\mathbf{u} + \mathbf{u}\cdot\nabla\mathbf{u} - \nabla\cdot\left(2\nu\nabla^s\mathbf{u}\right) + \nabla p &=& \mathbf{f} & \text{in }\Omega\times (0,T)\\
    \nabla\cdot\mathbf{u} &=& 0 & \text{in }\Omega\times(0,T)\\
    \mathbf{u}&=&\bm{0}&\text{on }\partial\Omega\times(0,T)\\
    \mathbf{u}\vert_{t=0} &=& \mathbf{u}_0&\text{in }\Omega\text{ ,}\end{array} \nonumber
\end{equation}
}
\right.
$$
Above, $\nu > 0$ is the kinematic viscosity, $\nabla^s$ is the symmetrized gradient, and $\mathbf{f}:\Omega\times(0,T)\to\mathbb{R}^d$ is a source term.  To facilitate the development of variational numerical approaches, we reformulate this problem in variaional form.  Let
\begin{align}
    \mathcal{V}&:=\left(H_0^1(\Omega)\right)^d\text{ ,}\\
        \mathcal{Q}&:=L^2_0(\Omega)\text{ ,}\\
        \mathcal{X}&:= \mathcal{V}\times\mathcal{Q}\text{ ,}
\end{align}
and
\begin{align}
    \mathcal{V}_T&:=\left\{\mathbf{v}\in C^0\left(\lbrack 0,T\rbrack;\mathcal{V}\right)~:~\partial_t\mathbf{v}\in L^2((0,T);\mathcal{V})\right\}\text{ ,}\\
        \mathcal{Q}_T&:=L^2((0,T);\Omega)\text{ ,}\\
        \mathcal{X}_T&:= \mathcal{V}_T\times\mathcal{Q}_T\text{ .}
\end{align}
We refer to $\mathcal{V}$ as the velocity space and and $\mathcal{Q}$ as the pressure space. The variational problem corresponding to Problem $(S)$ is then:
$$
(V) \left\{ \hspace{5pt}
\parbox{6in}{
\noindent Find $(\mathbf{u},p)\in\mathcal{X}_T$ such that $\mathbf{u}(0)=\mathbf{u}_0$ and, for a.e. $t\in (0,T)$,
\begin{equation}
    \left(\partial_t\mathbf{u}(t),\mathbf{v}\right)_{L^2(\Omega)} + c(\mathbf{u}(t),\mathbf{u}(t),\mathbf{v}) + k(\mathbf{u}(t),\mathbf{v}) - b(\mathbf{v},p(t)) + b(\mathbf{u}(t),q) = (\mathbf{f}(t),\mathbf{v})_{L^2(\Omega)} \nonumber
\end{equation}
for all $(\mathbf{v},q)\in\mathcal{X}$, where
\begin{align}
    c(\mathbf{v}_1,\mathbf{v}_2,\mathbf{v}_3) &= \int_{\Omega}(\mathbf{v}_1\cdot\nabla\mathbf{v}_2)\cdot\mathbf{v}_3\,d\Omega\text{ ,} \nonumber \\
    k(\mathbf{v}_1,\mathbf{v}_2) &= \int_\Omega 2\nu\nabla^s\mathbf{v}_1 : \nabla^s\mathbf{v}_2\,d\Omega\text{ ,} \nonumber \\
    b(\mathbf{v},q) &= \int_\Omega\nabla\cdot\mathbf{v} q\,d\Omega\text{ .} \nonumber
\end{align}
}
\right.
$$
In the sequel, when there is no risk of confusion with tuple notation, we may drop the subscript ``$L^2(\Omega)$'' from the $L^2(\Omega)$ inner product, and likewise for its induced norm.  

\subsection{Variational multiscale analysis}
Let $\mathcal{V}^h\subset\mathcal{V}$, $\mathcal{Q}^h\subset\mathcal{Q}$ be an inf-sup stable velocity-pressure pair \cite{Babuska71,Brezzi1974,Boffi2008}.  We refer to $\mathcal{V}^h$ as the coarse-scale velocity space and $\mathcal{Q}^h$ as the coarse-scale pressure space, and we assume that both $\mathcal{V}^h$ and $\mathcal{Q}^h$ are finite-dimensional.  We further assume throughout that $\mathcal{Q}^h$ is $H^1$-conforming, i.e., $\mathcal{Q}^h \subset H^1(\Omega)$.  Let $\mathcal{X}^h :=\mathcal{V}^h\times\mathcal{Q}^h$.  We define the action of the Stokes projector $\mathcal{P}_\mathcal{X}:\mathcal{X}\to\mathcal{X}^h$ as:  Given $\mathbf{X} = (\mathbf{w},r)\in\mathcal{X}$, find $\mathcal{P}_\mathcal{X}\mathbf{X} = (\mathbf{w}^h,r^h)\in\mathcal{X}^h$ such that
\begin{align}
    \nonumber&k(\mathbf{w}^h,\mathbf{v}^h) - b(\mathbf{v}^h,r^h) + b(\mathbf{w}^h,q^h) + (\tau_\text{C}\nabla\cdot\mathbf{w}^h,\nabla\cdot\mathbf{v}^h)\\
    &~~~= k(\mathbf{w},\mathbf{v}^h) - b(\mathbf{v}^h,r) + b(\mathbf{w},q^h) + (\tau_\text{C}\nabla\cdot\mathbf{w},\nabla\cdot\mathbf{v}^h)~~~~~~~\forall (\mathbf{v}^h,q^h)\in\mathcal{X}^h\text{ ,}
\end{align}
where the parameter $\tau_\text{C}\geq 0$ penalizes divergence of $\mathbf{w}^h$.  Let us decompose the solution of Problem $(V)$ at time $t$ into 
\begin{equation}
    (\mathbf{u}(t),p(t)) = (\mathbf{u}^h(t),p^h(t)) + (\mathbf{u}'(t),p'(t))\text{ ,}
\end{equation}
where 
\begin{equation}
    \mathcal{P}_\mathcal{X}(\mathbf{u}(t),p(t))\in\mathcal{X}^h\text{ .}
\end{equation}
Then we can decompose Problem $(V)$ into coarse-scale and fine-scale problems.  The coarse-scale problem is
\begin{align}
    \nonumber &(\partial_t\mathbf{u}^h(t),\mathbf{v}) + c(\mathbf{u}^h(t),\mathbf{u}^h(t),\mathbf{v}) + k(\mathbf{u}^h(t),\mathbf{v}^h) - b(\mathbf{v}^h,p^h(t)) + b(\mathbf{u}^h(t),q^h)\\
    \nonumber &+c(\mathbf{u}^h(t),\mathbf{u}'(t),\mathbf{v}^h)+c(\mathbf{u}'(t),\mathbf{u}^h(t),\mathbf{v}^h)+c(\mathbf{u}'(t),\mathbf{u}'(t),\mathbf{v}^h)+(\partial_t\mathbf{u}'(t),\mathbf{v}^h)\\
    &+ (\tau_\text{C}\nabla\cdot\mathbf{u}^h(t),\nabla\cdot\mathbf{v}^h) =(\mathbf{f}(t),\mathbf{v}^h)~~~~~~~\forall(\mathbf{v}^h,q^h)\in\mathcal{X}^h\label{eq:coarse-problem}
\end{align}
and the fine-scale problem is
\begin{align}
    \nonumber &(\partial_t\mathbf{u}'(t),\mathbf{v}') + c(\mathbf{u}^h(t),\mathbf{u}'(t),\mathbf{v}') + k(\mathbf{u}'(t),\mathbf{v}')-b(\mathbf{v}',p'(t)) + b(\mathbf{u}'(t),q')\\
    &+c(\mathbf{u}'(t),\mathbf{u}^h(t),\mathbf{v}') + c(\mathbf{u}'(t),\mathbf{u}'(t),\mathbf{v}') + b(\mathbf{u}^h(t),q') = -(\mathbf{r}_{\text{M}}(t),\mathbf{v}')~~~~~~~\forall(\mathbf{v}',q')\in\mathcal{X}'\text{ ,}\label{eq:fine-problem}
\end{align}
where $\mathbf{r}_{\text{M}}$ is the residual of the momentum balance equation from Problem $(S)$,
\begin{equation}\label{eq:residual}
    \mathbf{r}_\text{M} = \partial_t\mathbf{u}^h + \mathbf{u}^h\cdot\nabla\mathbf{u}^h - \nabla\cdot\left(2\nu\nabla^s\mathbf{u}^h\right) + \nabla p^h - \mathbf{f}\text{ ,}
\end{equation}
and $\mathcal{X}'$ is the orthogonal complement of $\mathcal{X}^h$ in $\mathcal{X}$, with respect to the projector $\mathcal{P}_\mathcal{X}$.  

\begin{remark}
The requirement that the coarse-scale pressure space $\mathcal{Q}^h$ be $H^1$-conforming cannot be relaxed for the VMS-based formulation presented in this paper, as we shall see later.  Inf-sup stable finite element and isogeometric spaces with continuous pressure approximation satisfy this requirement.  Thus, the VMS-based formulation presented here may be applied to the MINI element \cite{arnold1984stable}, Taylor-Hood element \cite{taylor1973numerical},  isogeometric Taylor-Hood and sub-grid elements \cite{bressan2013isogeometric}, and isogeometric N\'{e}d\'{e}lec and Raviart-Thomas elements \cite{buffa2011isogeometric}.  Isogeometric Raviart-Thomas elements are commonly referred to as divergence-conforming B-spline discretizations \cite{Evans2011}.
\end{remark}

\begin{remark}
The use of a Stokes projector is key to ensuring discrete mass conservation.  One can also enforce discrete mass conservation by defining the velocity field using constrained optimization, as in \cite{evans2009enforcement}, but this leads to a more complicated formulation.
\end{remark}

\begin{remark}
The Stokes projector employed here is similar to the Stokes projector presented in \cite{ten2018correct}.  However, the projector here has an additional grad-div term.  The appearance of grad-div stabilization in \eqref{eq:coarse-problem} follows from this grad-div term, and it {\em does not} represent a model of the fine-scale pressure, as it does in some other VMS-derived numerical approaches, e.g., \cite{Bazilevs07b}.  
\end{remark}

\begin{remark}
For standard $H^1$-conforming finite element velocity spaces $\mathcal{V}^h$, the term $\nabla\cdot\left(2\nu\nabla^s\mathbf{u}^h\right)$ appearing in \eqref{eq:residual} is not square-integrable, and hence the inner product $(\mathbf{r}_{\text{M}},\mathbf{v}')$ appearing in \eqref{eq:fine-problem} is not well-defined.  To remedy this situation, we replace the inner-product $(\mathbf{r}_{\text{M}},\mathbf{v}')$ with a corresponding summation of inner-products over element interiors, i.e., $\sum_{e=1}^{n_{el}} (\mathbf{r}_{\text{M}},\mathbf{v}')_{\Omega^e}$.  This is standard practice with stabilized and multiscale finite element methods \cite{HugScoFr04}.  To simplify exposition, we overload the notation $(\mathbf{r}_{\text{M}},\mathbf{v}')$ to also mean this summation of inner-products over element interiors when the momentum residual $\mathbf{r}_{\text{M}}$ is not square-integrable.  One may alternatively globally reconstruct $\nabla\cdot\left(2\nu\nabla^s\mathbf{u}^h\right)$, which yields improved accuracy in certain settings \cite{jansen1999better}.
\end{remark}

\subsection{Treatment of convection terms}
For any $\mathbf{a}\in\mathcal{V}$ with $\nabla\cdot\mathbf{a}= 0$,
\begin{equation}
    c(\mathbf{a},\mathbf{v}_1,\mathbf{v}_2) = c_{\text{cons}}(\mathbf{a},\mathbf{v}_1,\mathbf{v}_2) = c_{\text{skew}}(\mathbf{a},\mathbf{v}_1,\mathbf{v}_2)~~~~~~~\forall\mathbf{v}_1,\mathbf{v}_2\in\mathcal{V}\text{ ,}
\end{equation}
where
\begin{align}
    c_{\text{cons}}(\mathbf{a},\mathbf{v}_1,\mathbf{v}_2) &= -\int_\Omega\mathbf{v}_1\cdot\left(\mathbf{a}\cdot\nabla\mathbf{v}_2\right)\,d\Omega\text{ ,}\\
    c_{\text{skew}}(\mathbf{a},\mathbf{v}_1,\mathbf{v}_2) &= \frac{1}{2}\left(c(\mathbf{a},\mathbf{v}_1,\mathbf{v}_2) + c_{\text{cons}}(\mathbf{a},\mathbf{v}_1,\mathbf{v}_2)\right)\text{ .}
\end{align}
Thus, in \eqref{eq:coarse-problem}, we can select
\begin{align}
    c(\mathbf{u}^h(t)+\mathbf{u}'(t),\mathbf{u}^h(t),\mathbf{v}^h) &= c_{\text{skew}}(\mathbf{u}^h(t)+\mathbf{u}'(t),\mathbf{u}^h(t),\mathbf{v}^h)\text{ ,}\\
    c(\mathbf{u}^h(t) + \mathbf{u}'(t),\mathbf{u}'(t),\mathbf{v}^h) &= c_{\text{cons}}(\mathbf{u}^h(t) + \mathbf{u}'(t),\mathbf{u}'(t),\mathbf{v}^h)\text{ .}
\end{align}
These selections are critical for energy stability.

\subsection{Modifying the fine-scale problem}
To adapt \eqref{eq:coarse-problem}--\eqref{eq:fine-problem} into a practical numerical method, we make the following modeling choices:
\begin{enumerate}
    \item We assume that $c(\mathbf{u}'(t),\mathbf{u}'(t),\mathbf{v}') = 0$, i.e., we ignore fine-scale nonlinearity in the fine-scale problem.
    \item We assume that
    $b(\mathbf{u}^h(t),q') = 0$, i.e., we ignore compressibility of the coarse-scale velocity field in the fine-scale problem.
    \item We introduce the model
    \begin{equation}
        c(\mathbf{u}^h(t),\mathbf{u}'(t),\mathbf{v}') + k(\mathbf{u}'(t),\mathbf{v}') = (\tau_{\text{M}}^{-1}\mathbf{u}'(t),\mathbf{v}')\text{ ,}
    \end{equation}
    where $\tau_{\text{M}}$ is a stabilization parameter (whose value remains to be determined).  This is inspired by analogy to advection--diffusion, where the fine-scale Green's function is localized in the case of an $H^1_0$ projector \cite{HugSan06}.  
    \item We replace $\mathcal{X}'$ with surrogate fine-scale spaces $\mathcal{V}'\times\mathcal{Q}'$.  In particular, we select
    \begin{align}
        \mathcal{V}'&:=\left(L^2(\Omega)\right)^d\text{ ,}\label{eq:v-prime-l2}\\
        \mathcal{Q}'&\supseteq\mathcal{Q}^h\text{ .}
    \end{align}
    We further assume that $\mathcal{Q}'$ is finite-dimensional.
\end{enumerate}
Note that since $\mathcal{V}' \not\subseteq H(\text{div},\Omega)$, we must employ the divergence theorem to replace $b(\mathbf{v}',p'(t))$ and $b(\mathbf{u}'(t),q')$ by $-(\nabla p'(t),\mathbf{v}')$ and $-(\nabla q',\mathbf{u}'(t))$, respectively, in \eqref{eq:fine-problem}.  This requires that $\mathcal{Q}'\subset H^1(\Omega)$, and since $\mathcal{Q}^h \subseteq \mathcal{Q}'$, this also requires $\mathcal{Q}^h\subset H^1(\Omega) $, which we have already assumed.

\subsection{Semi-discrete formulation}\label{sec:semi-discrete}
Define
\begin{align}
    \mathcal{V}^h_T&:=\left\{\mathbf{v}\in C^0(\lbrack 0,T\rbrack;\mathcal{V}^h)~:~\partial_t\mathbf{v}\in L^2((0,T);\mathcal{V}^h)\right\}\text{ ,}\\
    \mathcal{V}'_T&:=\left\{\mathbf{v}\in C^0(\lbrack 0,T\rbrack;\mathcal{V}')~:~\partial_t\mathbf{v}\in L^2((0,T);\mathcal{V}')\right\}\text{ ,}\\
    \mathcal{Q}^h_T&:= L^2((0,T);\mathcal{Q}^h)\text{ ,}\\
    \mathcal{Q}'_T&:= L^2((0,T);\mathcal{Q}')\text{ .}
\end{align}
We may now define the following semi-discrete formulation:  
$$
(V^h) \left\{ \hspace{5pt}
\parbox{5.905in}{
\noindent Find $(\mathbf{u}^h,p^h)\in\mathcal{V}_T^h\times\mathcal{Q}_T^h$ and $(\mathbf{u}',p')\in\mathcal{V}_T'\times\mathcal{Q}_T'$ satisfying $\mathbf{u}^h(0)=\mathbf{u}^h_0$, $\mathbf{u}'(0)=\mathbf{u}'_0$, the coarse-scale problem
\begin{align}
    \nonumber &(\partial_t\mathbf{u}^h(t),\mathbf{v}^h) + c_{\text{skew}}(\mathbf{u}^h(t),\mathbf{u}^h(t),\mathbf{v}^h) + k(\mathbf{u}^h(t),\mathbf{v}^h) - b(\mathbf{v}^h,p^h(t)) + b(\mathbf{u}^h(t),q^h)\\
    \nonumber &+c_{\text{cons}}(\mathbf{u}^h(t),\mathbf{u}'(t),\mathbf{v}^h)+c_{\text{skew}}(\mathbf{u}'(t),\mathbf{u}^h(t),\mathbf{v}^h)+c_{\text{cons}}(\mathbf{u}'(t),\mathbf{u}'(t),\mathbf{v}^h)+(\partial_t\mathbf{u}'(t),\mathbf{v}^h)\\
    &+ (\tau_\text{C}\nabla\cdot\mathbf{u}^h(t),\nabla\cdot\mathbf{v}^h) =(\mathbf{f}(t),\mathbf{v}^h) \nonumber
\end{align}
for all $(\mathbf{v}^h,q^h)\in\mathcal{X}^h$ and a.e. $t\in (0,T)$, and the fine-scale problem
\begin{align}
    \nonumber & (\partial_t\mathbf{u}'(t),\mathbf{v}') + (\tau^{-1}_{\text{M}}\mathbf{u}'(t),\mathbf{v}') + c(\mathbf{u}'(t),\mathbf{u}^h(t),\mathbf{v}')\\
    &+(\nabla p'(t),\mathbf{v}') - (\nabla q',\mathbf{u}'(t)) = -(\mathbf{r}_\text{M}(t),\mathbf{v}') \nonumber
\end{align}
for all $(\mathbf{v}',q')\in\mathcal{V}'\times\mathcal{Q}'$ and a.e. $t\in (0,T)$.
}
\right.
$$
Above, $\mathbf{u}^h_0$ and $\mathbf{u}'_0$ are suitable initial conditions for the coarse-scale and fine-scale velocity fields which may be defined, for instance, using the Stokes projector.  We have the following trivial result for the semi-discrete formulation given by Problem $(V^h)$ since $\mathcal{Q}^h \subseteq \mathcal{Q}'$.

\begin{lemma}[Discrete Mass Conservation]\label{mass}
It holds that $(q^h,\nabla \cdot \mathbf{u}^h(t))=0$ and $-(\nabla q^h,\mathbf{u}'(t))=0$
for all $q^h \in \mathcal{Q}^h$ and a.e. $t\in (0,T)$.
\end{lemma}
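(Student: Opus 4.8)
The plan is to extract the two asserted identities directly from the two equations comprising Problem $(V^h)$ by making the simplest possible choices of test functions, exploiting linearity of every form in its test-function slot together with the inclusion $\mathcal{Q}^h \subseteq \mathcal{Q}'$. Fix $t$ in the full-measure set on which both equations of $(V^h)$ hold, and fix an arbitrary $q^h \in \mathcal{Q}^h$.

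First I would establish $(q^h,\nabla\cdot\mathbf{u}^h(t))=0$ from the coarse-scale equation. Take the test pair $(\mathbf{v}^h,q^h)=(\mathbf{0},q^h)\in\mathcal{X}^h$. Every term in the coarse-scale equation other than $b(\mathbf{u}^h(t),q^h)$ carries $\mathbf{v}^h$ in a slot in which it is (homogeneous) linear: the time-derivative term, $c_{\text{skew}}(\mathbf{u}^h,\mathbf{u}^h,\cdot)$, $k(\mathbf{u}^h,\cdot)$, $-b(\cdot,p^h)$, the three convective coupling terms $c_{\text{cons}}(\mathbf{u}^h,\mathbf{u}',\cdot)$, $c_{\text{skew}}(\mathbf{u}',\mathbf{u}^h,\cdot)$, $c_{\text{cons}}(\mathbf{u}',\mathbf{u}',\cdot)$ (here one uses that $c$ is linear in its third argument and that $c_{\text{cons}}(\mathbf a,\mathbf v_1,\mathbf v_2)$ vanishes when $\mathbf v_2=\mathbf 0$, since it involves $\nabla\mathbf v_2$), the term $(\partial_t\mathbf{u}',\cdot)$, the grad-div term, and the load term. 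Hence all of these vanish, leaving $b(\mathbf{u}^h(t),q^h)=0$, which by the definition $b(\mathbf v,q)=\int_\Omega(\nabla\cdot\mathbf v)\,q\,d\Omega$ is exactly $(q^h,\nabla\cdot\mathbf{u}^h(t))=0$.

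Next I would establish $-(\nabla q^h,\mathbf{u}'(t))=0$ from the fine-scale equation. Since $\mathcal{Q}^h\subseteq\mathcal{Q}'$ and $\mathcal{Q}^h\subset H^1(\Omega)$, the pair $(\mathbf{v}',q')=(\mathbf{0},q^h)$ is an admissible test pair in $\mathcal{V}'\times\mathcal{Q}'$ and $\nabla q^h\in(L^2(\Omega))^d$, so the term $-(\nabla q^h,\mathbf{u}'(t))$ is well defined. Inserting this pair, every remaining term carries $\mathbf{v}'=\mathbf 0$ linearly — $(\partial_t\mathbf{u}',\cdot)$, $(\tau_{\text M}^{-1}\mathbf{u}',\cdot)$, $c(\mathbf{u}',\mathbf{u}^h,\cdot)$, $(\nabla p',\cdot)$, and $-(\mathbf{r}_{\text M},\cdot)$ — and hence vanishes, leaving $-(\nabla q^h,\mathbf{u}'(t))=0$. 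Since $q^h\in\mathcal{Q}^h$ was arbitrary and $t$ ranged over a full-measure subset of $(0,T)$, both identities hold for all $q^h\in\mathcal{Q}^h$ and a.e.\ $t\in(0,T)$. There is no real obstacle here — the only points requiring care are the bookkeeping that $c_{\text{cons}}$ (and thus $c_{\text{skew}}$) genuinely vanishes when its third argument is zero, and the observation that the $H^1$-conformity of $\mathcal{Q}^h$ is what makes $-(\nabla q^h,\mathbf{u}'(t))$ meaningful for $\mathbf{u}'(t)\in(L^2(\Omega))^d$.
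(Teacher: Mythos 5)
Your proposal is correct and is precisely the argument the paper intends: the paper offers no written proof, calling the result trivial ``since $\mathcal{Q}^h \subseteq \mathcal{Q}'$,'' and that inclusion is exactly what you use to make $(\mathbf{0},q^h)$ an admissible test pair in the fine-scale problem, with the coarse-scale identity following from the analogous choice in the coarse-scale problem. Your extra care about $c_{\text{cons}}$ vanishing for a zero third argument and about $H^1$-conformity of $\mathcal{Q}^h$ making $-(\nabla q^h,\mathbf{u}'(t))$ well defined is sound bookkeeping that the paper leaves implicit.
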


Lemma \ref{mass} states both the coarse-scale and fine-scale velocity fields are discretely divergence-free, and hence the semi-discrete formulation given by Problem $(V^h)$ has not upset the underlying mass conservation properties of Galerkin's method.  In particular, if the coarse-scale spaces $\mathcal{V}^h$ and $\mathcal{Q}^h$ are divergence-conforming, i.e., $\nabla \cdot \mathcal{V}^h \subseteq \mathcal{Q}^h$, then the formulation yields a pointwise divergence-free coarse-scale velocity field.  This is due to the fact discretely divergence-free coarse-scale velocity fields are pointwise divergence-free for divergence-conforming discretizations \cite{Scott1985}.  As discussed in Section \ref{sec:introduction}, this property is not shared by the residual-based VMS formulation of Bazilevs et. al \cite{Bazilevs07b}.

\begin{remark}
The semi-discrete formulation given by Problem $(V^h)$ is similar to the Galerkin/least-squares formulation with dynamic divergence-free small-scales (GLSDD) presented in \cite{ten2018correct}.  However, there are a few key differences.  First of all, GLSDD is of Galerkin/least-squares (GLS) type, while the formulation presented here is not.  Second of all, the fine-scale pressure space in GLSDD is taken to be identically the coarse-scale pressure space.  Finally, GLSDD is specifically for divergence-conforming discretizations.
\end{remark}

\subsection{Energetic stability}
We proceed with an energetic analysis of the semi-discrete formulation given by Problem $(V^h)$.  To do so, we make a number of assumptions.  We first assume that $\mathcal{V}^h$ and $\mathcal{Q}^h$ are (mapped) piecewise-polynomial or piecewise-rational approximation spaces with respect to a mesh $\mathcal{M}$.  We denote each element of the mesh $\mathcal{M}$ as $\Omega^e$, and we note that $\Omega = \text{int}\left(\cup_{e=1}^{n_{el}} \overline{\Omega^e} \right)$ where $n_{el}$ is the number of elements in the mesh.  We next make two assumptions regarding the velocity space $\mathcal{V}^h$ and the stabilization parameter $\tau_{\text{M}}$.

\begin{assumption}\label{ass1}
For each element $\Omega^e \in \mathcal{M}$, there exists a constant $C_\text{inv} \geq 0$ independent of the diameter $h_e$ of $\Omega^e$ such that
\begin{equation}
     \left(\nabla\cdot(2\nu\nabla^s\mathbf{v}^h),\nabla\cdot(2\nu\nabla^s\mathbf{v}^h)\right)_{\Omega^e}\leq \frac{C_{\text{inv}}}{h^2_e}\left(\nu\nabla^s\mathbf{v}^h,\nu\nabla^s\mathbf{v}^h\right)_{\Omega^e}
\end{equation}
for all $\mathbf{v}^h \in \mathcal{V}^h$.
\end{assumption}

\begin{assumption}\label{ass2}
For each element $\Omega^e \in \mathcal{M}$,
\begin{equation}
     \tau_\text{M} \leq \frac{h_e^2}{C_\text{inv} \nu}\text{ ,}
\end{equation}
where $C_\text{inv}$ is the constant associated with Assumption \ref{ass1}.
\end{assumption}

Assumption \ref{ass1} holds for standard finite element and isogeometric spatial discretizations \cite{Bazilevs2006}, while Assumption \ref{ass2} holds for standard choices of the stabilization parameter \cite{HugScoFr04}.  With the above assumptions in hand, we have the following energetic stability result for our formulation.

\begin{lemma}[Energetic Stability]\label{energy}
Provided that Assumptions \ref{ass1} and \ref{ass2} are satisfied, the kinetic energy associated with Problem $(V^h)$ evolves as
\begin{align}
    \frac{d}{dt}\left(\frac{1}{2}\left\Vert\mathbf{u}^h + \mathbf{u}'\right\Vert^2\right) \leq (\mathbf{f},\mathbf{u}^h+\mathbf{u}') - \frac{1}{2}k(\mathbf{u}^h,\mathbf{u}^h) - (\tau_\text{C}\nabla\cdot\mathbf{u}^h,\nabla\cdot\mathbf{u}^h) - \frac{1}{2}(\tau_\text{M}^{-1}\mathbf{u}',\mathbf{u}')\text{ .}
\end{align}
In particular, the kinetic energy decays in time for a homogeneous source term $\mathbf{f} = \bm{0}$.
\end{lemma}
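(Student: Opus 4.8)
The plan is to derive the energy identity by a standard ``test with the solution'' argument, carefully exploiting the specific choices of $c_{\text{skew}}$, $c_{\text{cons}}$, and the two extra coupling terms between scales.  First I would set $(\mathbf{v}^h,q^h) = (\mathbf{u}^h(t),p^h(t))$ in the coarse-scale problem and $(\mathbf{v}',q') = (\mathbf{u}'(t),p'(t))$ in the fine-scale problem, then add the two equations.  On the left, the terms $-b(\mathbf{u}^h,p^h) + b(\mathbf{u}^h,p^h)$ cancel, and in the fine-scale equation $(\nabla p',\mathbf{u}') - (\nabla p',\mathbf{u}') = 0$ (using $\mathbf{v}' = \mathbf{u}'$, $q' = p'$), so the pressures drop out entirely.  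The time-derivative terms assemble: $(\partial_t\mathbf{u}^h,\mathbf{u}^h) + (\partial_t\mathbf{u}',\mathbf{u}^h) + (\partial_t\mathbf{u}',\mathbf{u}')$; I would add and subtract $(\partial_t\mathbf{u}^h,\mathbf{u}')$ to complete the square and obtain $\frac{1}{2}\frac{d}{dt}\|\mathbf{u}^h+\mathbf{u}'\|^2 - (\partial_t\mathbf{u}^h,\mathbf{u}')$, keeping track of the leftover cross term.  Actually, a cleaner route: observe that testing the coarse equation with $(\mathbf{u}^h,p^h)$ already contains $(\partial_t\mathbf{u}^h,\mathbf{u}^h) + (\partial_t\mathbf{u}',\mathbf{u}^h)$, and I want the total derivative of $\frac12\|\mathbf{u}^h+\mathbf{u}'\|^2 = \frac12(\|\mathbf{u}^h\|^2 + 2(\mathbf{u}^h,\mathbf{u}') + \|\mathbf{u}'\|^2)$, whose derivative is $(\partial_t\mathbf{u}^h,\mathbf{u}^h) + (\partial_t\mathbf{u}^h,\mathbf{u}') + (\partial_t\mathbf{u}',\mathbf{u}^h) + (\partial_t\mathbf{u}',\mathbf{u}')$; so the assembled time terms equal this total derivative minus $(\partial_t\mathbf{u}^h,\mathbf{u}')$, and I will need to absorb that stray term.

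Next I would handle the convective terms.  The diagonal skew-symmetric terms $c_{\text{skew}}(\mathbf{u}^h,\mathbf{u}^h,\mathbf{u}^h) = 0$ and $c(\mathbf{u}',\mathbf{u}^h,\mathbf{u}')$ in the fine-scale equation paired with $c_{\text{skew}}(\mathbf{u}',\mathbf{u}^h,\mathbf{u}^h)$ from the coarse equation — here I expect the cross terms to telescope.  Specifically, by definition $c_{\text{skew}}(\mathbf{a},\mathbf{v},\mathbf{v}) = \frac12(c(\mathbf{a},\mathbf{v},\mathbf{v}) + c_{\text{cons}}(\mathbf{a},\mathbf{v},\mathbf{v})) = 0$ for all $\mathbf{a}$ since $c_{\text{cons}}(\mathbf{a},\mathbf{v},\mathbf{v}) = -c(\mathbf{a},\mathbf{v},\mathbf{v})$ by integration by parts of the $L^2$-pairing.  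So $c_{\text{skew}}(\mathbf{u}^h + \mathbf{u}',\mathbf{u}^h,\mathbf{u}^h) = 0$ directly, eliminating the first two convective terms of the coarse problem at once.  For the remaining coarse convective contributions $c_{\text{cons}}(\mathbf{u}^h,\mathbf{u}',\mathbf{u}^h) + c_{\text{cons}}(\mathbf{u}',\mathbf{u}',\mathbf{u}^h) = c_{\text{cons}}(\mathbf{u}^h+\mathbf{u}',\mathbf{u}',\mathbf{u}^h)$, and from the fine-scale problem $c(\mathbf{u}',\mathbf{u}^h,\mathbf{u}')$: I would use $c_{\text{cons}}(\mathbf{b},\mathbf{v}_1,\mathbf{v}_2) = -c(\mathbf{b},\mathbf{v}_2,\mathbf{v}_1) - (\nabla\cdot\mathbf{b}, \mathbf{v}_1\cdot\mathbf{v}_2)$ type identities to try to cancel against $c(\mathbf{u}',\mathbf{u}^h,\mathbf{u}')$, but crucially the fine-scale velocity is only discretely divergence-free against $\mathcal{Q}^h$, not pointwise, so I do not expect exact cancellation — I anticipate a leftover involving $\nabla\cdot\mathbf{u}^h$ or $\nabla\cdot\mathbf{u}'$ that must be bounded by the grad-div and $\tau_{\text{M}}^{-1}$ dissipation.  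This convective bookkeeping, together with locating exactly where the $\frac12 k(\mathbf{u}^h,\mathbf{u}^h)$ and $\frac12(\tau_{\text{M}}^{-1}\mathbf{u}',\mathbf{u}')$ on the right-hand side come from (they are ``half'' terms, so half the viscous and model dissipation must be spent elsewhere), is the main obstacle.

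For the viscous and model terms, testing gives $k(\mathbf{u}^h,\mathbf{u}^h) + (\tau_{\text{C}}\nabla\cdot\mathbf{u}^h,\nabla\cdot\mathbf{u}^h) + (\tau_{\text{M}}^{-1}\mathbf{u}',\mathbf{u}')$ on the left.  The right-hand side of the claimed inequality keeps only half of $k$ and half of $\tau_{\text{M}}^{-1}$, which signals that the other halves are consumed controlling the residual term $-(\mathbf{r}_{\text{M}},\mathbf{u}')$ and the stray $(\partial_t\mathbf{u}^h,\mathbf{u}')$ / leftover convective terms.  Concretely, I would expand $\mathbf{r}_{\text{M}} = \partial_t\mathbf{u}^h + \mathbf{u}^h\cdot\nabla\mathbf{u}^h - \nabla\cdot(2\nu\nabla^s\mathbf{u}^h) + \nabla p^h - \mathbf{f}$ and pair each piece with $-\mathbf{u}'$: the $\partial_t\mathbf{u}^h$ piece produces $-(\partial_t\mathbf{u}^h,\mathbf{u}')$ which exactly cancels the stray time term; the $-\nabla\cdot(2\nu\nabla^s\mathbf{u}^h)$ piece I would bound elementwise via Cauchy--Schwarz and Young's inequality using Assumption \ref{ass1}, $(\nabla\cdot(2\nu\nabla^s\mathbf{u}^h),-\mathbf{u}')_{\Omega^e} \le \frac{C_{\text{inv}}}{h_e^2}\|\nu\nabla^s\mathbf{u}^h\|_{\Omega^e}\cdot\|\mathbf{u}'\|_{\Omega^e}$ (roughly), then Young's inequality splits this into a term $\le \frac12 k(\mathbf{u}^h,\mathbf{u}^h)$-worth and a term $\le \frac{C_{\text{inv}}\nu}{2h_e^2}\|\mathbf{u}'\|_{\Omega^e}^2$ which Assumption \ref{ass2} controls by $\frac12(\tau_{\text{M}}^{-1}\mathbf{u}',\mathbf{u}')$; the $\nabla p^h$ piece integrates by parts to $(p^h, \nabla\cdot\mathbf{u}')$ which, since $p^h \in \mathcal{Q}^h \subseteq \mathcal{Q}'$, vanishes by discrete mass conservation (Lemma \ref{mass}) — this is where $H^1$-conformity of $\mathcal{Q}^h$ and $\mathcal{Q}^h\subseteq\mathcal{Q}'$ are used; the convective piece $-(\mathbf{u}^h\cdot\nabla\mathbf{u}^h,\mathbf{u}')$ pairs with the leftover $c_{\text{cons}}$ and $c$ convective terms from above and should cancel (up to discretely-vanishing divergence terms); and the $-\mathbf{f}$ piece gives $(\mathbf{f},\mathbf{u}')$, which combines with $(\mathbf{f},\mathbf{u}^h)$ from the coarse equation to yield $(\mathbf{f},\mathbf{u}^h+\mathbf{u}')$.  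Collecting everything, moving dissipative terms to the right with the correct signs, and discarding the remaining nonnegative $\frac12 k$, $\tau_{\text{C}}$, and $\frac12\tau_{\text{M}}^{-1}$ contributions yields the stated inequality; setting $\mathbf{f} = \bm{0}$ makes the right-hand side nonpositive, giving energy decay.  I would double-check the convective cancellation most carefully, as the non-pointwise-divergence-free nature of $\mathbf{u}'$ is the subtle point where an error could hide.
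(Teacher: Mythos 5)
Your proposal follows essentially the same route as the paper's proof: test both problems with the solution, sum, cancel the convective terms via the skew/conservative pairings, kill $(\nabla p^h,\mathbf{u}')$ by Lemma \ref{mass} (using $\mathcal{Q}^h\subseteq\mathcal{Q}'$), note that the $\partial_t\mathbf{u}^h$ part of the residual completes the total time derivative, and absorb the viscous residual term with Young's inequality plus Assumptions \ref{ass1}--\ref{ass2}. The one complication you flag --- leftover $\nabla\cdot\mathbf{u}^h$ or $\nabla\cdot\mathbf{u}'$ terms in the convective cancellation --- does not materialize, because the paper defines $c_{\text{cons}}$ directly as $c_{\text{cons}}(\mathbf{a},\mathbf{v}_1,\mathbf{v}_2)=-\int_\Omega\mathbf{v}_1\cdot(\mathbf{a}\cdot\nabla\mathbf{v}_2)\,d\Omega=-c(\mathbf{a},\mathbf{v}_2,\mathbf{v}_1)$ identically (no integration by parts, hence no divergence correction), so $c_{\text{cons}}(\mathbf{u}^h,\mathbf{u}',\mathbf{u}^h)+c(\mathbf{u}^h,\mathbf{u}^h,\mathbf{u}')=0$ and $c_{\text{cons}}(\mathbf{u}',\mathbf{u}',\mathbf{u}^h)+c(\mathbf{u}',\mathbf{u}^h,\mathbf{u}')=0$ hold exactly and nothing extra needs to be absorbed.
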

\begin{proof}
Let $\mathbf{v}^h = \mathbf{u}^h(t)$, $\mathbf{v}' = \mathbf{u}'(t)$, $q^h = p^h(t)$, $q' = p'(t)$ in Problem $(V^h)$.  Summing the coarse-scale and fine-scale problems yields
\begin{align}
    \nonumber & (\partial_t(\mathbf{u}^h + \mathbf{u}')(t),(\mathbf{u}^h + \mathbf{u}')(t)) + c_\text{skew}(\mathbf{u}^h(t),\mathbf{u}^h(t),\mathbf{u}^h(t))\\
    \nonumber & +c_\text{skew}(\mathbf{u}'(t),\mathbf{u}^h(t),\mathbf{u}^h(t)) + c_\text{cons}(\mathbf{u}^h(t),\mathbf{u}'(t),\mathbf{u}^h(t))\\
    \nonumber & +c(\mathbf{u}^h(t),\mathbf{u}^h(t),\mathbf{u}'(t)) + c_\text{cons}(\mathbf{u}'(t),\mathbf{u}'(t),\mathbf{u}^h(t))\\
    \nonumber & +c(\mathbf{u}'(t),\mathbf{u}^h(t),\mathbf{u}'(t)) + k(\mathbf{u}^h(t),\mathbf{u}^h(t)) + (\tau_\text{C}\nabla\cdot\mathbf{u}^h(t),\nabla\cdot\mathbf{u}^h(t))\\
     & +(\tau^{-1}_\text{M}\mathbf{u}'(t),\mathbf{u}(t)) + (\nabla p^h(t),\mathbf{u}'(t)) - (\nabla\cdot(2\nu\nabla^s\mathbf{u}^h(t)),\mathbf{u}'(t)) = (\mathbf{f},\mathbf{u}^h(t) + \mathbf{u}'(t))\text{ .}\label{eq:energy-big-eqn}
\end{align}
Note that
\begin{align}
    (\partial_t(\mathbf{u}^h + \mathbf{u}')(t),(\mathbf{u}^h + \mathbf{u}')(t)) &= \frac{d}{dt}\left(\frac{1}{2}\left\Vert\mathbf{u}^h + \mathbf{u}'\right\Vert^2\right)(t)\text{ ,}\label{eq:first-id}\\
    c_\text{skew}(\mathbf{u}^h(t),\mathbf{u}^h(t),\mathbf{u}^h(t)) &= c_\text{skew}(\mathbf{u}'(t),\mathbf{u}^h(t),\mathbf{u}^h(t)) = 0\text{ ,}\\
    c_\text{cons}(\mathbf{u}^h(t),\mathbf{u}'(t),\mathbf{u}^h(t)) &+ c(\mathbf{u}^h(t),\mathbf{u}^h(t),\mathbf{u}'(t)) = 0\text{ ,}\\
    c_\text{cons}(\mathbf{u}'(t),\mathbf{u}'(t),\mathbf{u}^h(t)) &+ c(\mathbf{u}'(t),\mathbf{u}^h(t),\mathbf{u}'(t)) = 0\text{ ,}\\
    (\nabla p^h(t),\mathbf{u}'(t)) &= 0\text{ .}\label{eq:last-id}
\end{align}
Observe that \eqref{eq:last-id} holds due to Lemma \ref{mass}.  Using \eqref{eq:first-id}--\eqref{eq:last-id} in \eqref{eq:energy-big-eqn}, we get
\begin{align}
    \nonumber\frac{d}{dt}\left(\frac{1}{2}\left\Vert\mathbf{u}^h + \mathbf{u}'\right\Vert^2\right) =~& (\mathbf{f},\mathbf{u}^h + \mathbf{u}') + (\nabla\cdot(2\nu\nabla^s\mathbf{u}),\mathbf{u}') \\
    &- \left(k(\mathbf{u}^h,\mathbf{u}^h) + (\tau_\text{C}\nabla\cdot\mathbf{u}^h,\nabla\cdot\mathbf{u}^h) + (\tau_\text{M}^{-1}\mathbf{u}',\mathbf{u}')\right)\text{ .}
\end{align}
Using Young's inequality,
\begin{align}
    (\nabla\cdot(2\nu\nabla^s\mathbf{u}^h),\mathbf{u}')\leq \frac{1}{2}\left(\tau_\text{M}\nabla\cdot(2\nu\nabla^s\mathbf{u}^h),\nabla\cdot(2\nu\nabla^s\mathbf{u}^h)\right) + \frac{1}{2}\left(\tau_\text{M}^{-1}\mathbf{u}',\mathbf{u}'\right)\text{ .}
\end{align}
Since
\begin{equation}
    \tau_\text{M}\leq\frac{h_e^2}{C_\text{inv}\nu}~~~\text{on }\Omega^e\text{ ,}
\end{equation}
it holds that
\begin{equation}
    \left(\tau_\text{M} \nabla\cdot(2\nu\nabla^s\mathbf{u}^h),\nabla\cdot(2\nu\nabla^s\mathbf{u}^h)\right)_{\Omega^e}\leq \left(\nu\nabla^s\mathbf{u}^h,\nabla^s\mathbf{u}^h\right)_{\Omega^e}\text{ .}
\end{equation}
Consequently we have
\begin{equation}
    (\nabla\cdot(2\nu\nabla^s\mathbf{u}^h),\mathbf{u}') \leq \frac{1}{2}k(\mathbf{u}^h,\mathbf{u}^h) + \frac{1}{2}(\tau_\text{M}^{-1}\mathbf{u}',\mathbf{u}')
\end{equation}
and kinetic energy evolves as
\begin{align}
    \frac{d}{dt}\left(\frac{1}{2}\left\Vert\mathbf{u}^h + \mathbf{u}'\right\Vert^2\right) \leq (\mathbf{f},\mathbf{u}^h+\mathbf{u}') - \frac{1}{2}k(\mathbf{u}^h,\mathbf{u}^h) - (\tau_\text{C}\nabla\cdot\mathbf{u}^h,\nabla\cdot\mathbf{u}^h) - \frac{1}{2}(\tau_\text{M}^{-1}\mathbf{u}',\mathbf{u}')\text{ .}
\end{align}
This is the desired result.
\end{proof}

\begin{remark}
Note that Lemma \ref{energy} states that the total kinetic energy, that is, the kinetic energy associated with both the coarse- and fine-scales, decays in time for a homogeneous source term.  It does not state that the kinetic energy associated with just the coarse-scales decays in time for such a setting.  Indeed, we expect our VMS-based formulation to allow for backscatter of energy from fine-scales to coarse-scales \cite{principe2010dissipative}.
\end{remark}

\subsection{Time discretization and static condensation of the fine-scale velocity field}\label{sec:timediscretization}
Problem $(V^h)$ constitutes a set of differential-algebraic equations for the unknown coarse-scale and fine-scale velocity and pressure fields, and thus it can be discretized in time using virtually any time discretization scheme for differential-algebraic equations.  Once Problem $(V^h)$ is discretized in time with such a scheme, the fine-scale velocity field may be statically condensed from the system at each time-step, resulting in a nonlinear algebraic system for the coarse-scale velocity field, coarse-pressure pressure field, and  fine-scale pressure field.  To illustrate this, suppose that Problem $(V^h)$ is discretized in time using the backward Euler scheme.  Then, we have the following problem at the $n^\text{th}$ time step $t_n$: Given $(\mathbf{u}^h_{n-1},p^h_{n-1})\in\mathcal{V}^h \times\mathcal{Q}^h$ and $(\mathbf{u}'_{n-1},p'_{n-1})\in\mathcal{V}' \times\mathcal{Q}'$, find $(\mathbf{u}^h_n,p^h_n)\in\mathcal{V}^h \times\mathcal{Q}^h$ and $(\mathbf{u}'_n,p'_n)\in\mathcal{V}' \times\mathcal{Q}'$ satisfying the coarse-scale problem
\begin{align}
    \nonumber &\left( \frac{ \mathbf{u}^h_n - \mathbf{u}^h_{n-1} }{\Delta t_n} ,\mathbf{v}^h \right) + c_{\text{skew}}(\mathbf{u}^h_n,\mathbf{u}^h_n,\mathbf{v}^h) + k(\mathbf{u}^h_n,\mathbf{v}^h) - b(\mathbf{v}^h,p^h_n) + b(\mathbf{u}^h_n,q^h)\\
    \nonumber &+c_{\text{cons}}(\mathbf{u}^h_n,\mathbf{u}'_n,\mathbf{v}^h)+c_{\text{skew}}(\mathbf{u}'_n,\mathbf{u}^h_n,\mathbf{v}^h)+c_{\text{cons}}(\mathbf{u}'_n,\mathbf{u}'_n,\mathbf{v}^h)+\left( \frac{ \mathbf{u}'_n - \mathbf{u}'_{n-1} }{\Delta t_n} ,\mathbf{v}^h \right)\\
    &+ (\tau_\text{C}\nabla\cdot\mathbf{u}^h_n,\nabla\cdot\mathbf{v}^h) =(\mathbf{f}(t_n),\mathbf{v}^h)
    \label{eq:precondensed_coarse}
\end{align}
for all $(\mathbf{v}^h,q^h)\in\mathcal{X}^h$ and the fine-scale problem
\begin{align}
    &\left( \frac{ \mathbf{u}'_n - \mathbf{u}'_{n-1} }{\Delta t_n} ,\mathbf{v}' \right) + (\tau^{-1}_{\text{M}}\mathbf{u}'_n,\mathbf{v}') + c(\mathbf{u}'_n,\mathbf{u}^h_n,\mathbf{v}') \nonumber \\
    &+(\nabla p'_n,\mathbf{v}') - (\nabla q',\mathbf{u}'_n) = -(\mathbf{r}_\text{M}(t_n),\mathbf{v}') \label{eq:precondensed_fine}
\end{align}
for all $(\mathbf{v}^h,q^h)\in\mathcal{V}'\times\mathcal{Q}'$ where $\Delta t_n = t_n - t_{n-1}$.  Setting $(\mathbf{v}',q') = (\mathbf{v}',0)$ in \eqref{eq:precondensed_fine} yields a nonlinear algebraic equation for the fine-scale velocity field $\mathbf{u}'_{n}$.  The solution of this equation is \begin{equation}
\mathbf{u}'_{n} =  \left( \left( 1 + \frac{\Delta t_{n}}{\tau_{\text{M}}} \right) \textbf{I} + \Delta t_{n} \nabla \textbf{u}^h_{n} \right)^{-1} \left(\mathbf{u}'_{n-1} - \Delta t_{n} \left( \nabla p'_{n} + \mathbf{r}_\text{M}(t_{n}) \right) \right)\text{ ,}
\label{eq:static_cond}
\end{equation}
where $\textbf{I}$ is the identity matrix.  Inserting \eqref{eq:static_cond} back into \eqref{eq:precondensed_coarse} and \eqref{eq:precondensed_fine} then yields a nonlinear algebraic system for the coarse-scale velocity field $\mathbf{u}^h_{n}$, coarse-pressure pressure field $p^h_{n}$, and fine-scale pressure field $p'_{n}$.  Note in particular that inserting \eqref{eq:static_cond} into \eqref{eq:precondensed_fine} with $(\mathbf{v}',q') = (\mathbf{0},q')$ yields
\begin{align}
\left(\nabla q', \Delta t_{n} \mathbf{K}_\text{n} \nabla p'_n\right) = -\left(\nabla q', \Delta t_{n} \mathbf{K}_\text{n} \mathbf{r}_\text{M}(t_{n})\right) + \left(\nabla q', \mathbf{K}_\text{n}  \mathbf{u}'_{n-1}\right)\text{ ,}
\end{align}
where $\mathbf{K}_\text{n} = \left( \left( 1 + \frac{\Delta t_{n}}{\tau_{\text{M}}} \right) \textbf{I} + \Delta t_{n} \nabla \textbf{u}^h_{n} \right)^{-1}$.  Thus we see the fine-scale mass conservation constraint yields a Poisson-like problem for the fine-scale pressure field after static condensation of the fine-scale velocity field.  Alternative time discretization schemes, including the popular generalized $\alpha$ scheme \cite{Jansen2000}, yield similar Poisson-like problems for the fine-scale pressure field.

\subsection{Quasi-static subscales}\label{sec:alt-form}
The semi-discrete formulation given by Problem $(V^h)$ is analogous to the method of dynamics subscales proposed by Codina et al. \cite{CoPrGuBa07}.  By making other modeling choices, we can arrive at other methods.  In particular, we can arrive at an analogous model to the quasi-static subscale model of Bazilevs et al. \cite{Bazilevs07b}, by omitting the fine-scale unsteadiness and cross-stress terms from the fine-scale problem in Problem $(V^h)$, allowing us to solve for the fine-scale velocity directly:
\begin{equation}
    \mathbf{u}' = -\tau_{\text{M}}\left(\nabla p' + \mathbf{r}_\text{M}\right)\text{ .}
\end{equation}
(Note that for full consistency with \cite{Bazilevs07b}, one would also write every convection term in the coarse-scale problem in the conservative form $c_{\text{cons}}$.)  The resulting model is not guaranteed to be energy-stable, but it has been successfully applied in many turbulence studies and challenging engineering problems, e.g., \cite{Bazilevs07b,Bazilevs08a,Akkerman08a,Bazilevs09d,Bazilevs10d,Takizawa11n,BazHsu12a,BazHsu12b,HsuAkk12b,Korobenko14a}.  

\section{Convergence analysis for the Oseen problem}\label{sec:oseen-conv}
We continue with a convergence analysis of our VMS-based formulation.  To do so, we consider a simplified steady linear problem---Oseen flow---with a fixed solenoidal advection velocity, $\mathbf{a}\in\mathcal{V}$.  In weak form, it is:  Find $\mathbf{u}\in\mathcal{V}$ and $p\in\mathcal{Q}$ such that, for all $\mathbf{v}\in\mathcal{V}$ and $q\in\mathcal{Q}$,
\begin{equation}\label{eq:oseen-weak}
    c(\mathbf{a},\mathbf{u},\mathbf{v}) + k(\mathbf{u},\mathbf{v}) - b(\mathbf{v},p) + b(\mathbf{u},q) = (\mathbf{f},\mathbf{v})\text{ .}
\end{equation}
A discretization for \eqref{eq:oseen-weak} analogous to Problem $(V^h)$ is:  Find $(\mathbf{u}^h,p^h,\mathbf{u}',p')\in\mathcal{V}^h\times\mathcal{Q}^h\times\mathcal{V}'\times\mathcal{Q}'$ such that, for all $(\mathbf{v}^h,q^h,\mathbf{v}',q')\in\mathcal{V}^h\times\mathcal{Q}^h\times\mathcal{V}'\times\mathcal{Q}'$,
\begin{equation}
    A\left((\mathbf{u}^h,p^h,\mathbf{u}',p'),(\mathbf{v}^h,q^h,\mathbf{v}',q')\right) = (\mathbf{f},\mathbf{v}^h+\mathbf{v}')\text{ ,} \label{eq:full}
\end{equation}
where
\begin{align}
    \nonumber & A\left((\mathbf{u}^h,p^h,\mathbf{u}',p'),(\mathbf{v}^h,q^h,\mathbf{v}',q')\right) \\
    \nonumber &~~~ =c(\mathbf{a},\mathbf{u}^h,\mathbf{v}^h) + k(\mathbf{u}^h,\mathbf{v}^h) - b(\mathbf{v}^h,p^h) + b(\mathbf{u}^h,q^h)\\
    \nonumber &~~~ +c_\text{cons}(\mathbf{a},\mathbf{u}',\mathbf{v}^h) + (\tau_\text{C}\nabla\cdot\mathbf{u}^h,\nabla\cdot\mathbf{v}^h) + (\tau_\text{M}^{-1}\mathbf{u}',\mathbf{v}')\\
    \nonumber&~~~-b(\mathbf{v}',p') + b(\mathbf{u}',q') + (\mathbf{a}\cdot\nabla\mathbf{u}^h,v')\\
    &~~~-(\nabla\cdot(2\nu\nabla^s\mathbf{u}^h,\mathbf{v}') + (\nabla p^h,\mathbf{v}')\text{ .}
\end{align}
The simplified treatment of advection relative to Problem $(V^h)$ is a result of $\nabla\cdot\mathbf{a}=0$.

As in the alternative formulation of Section \ref{sec:alt-form}, we can formally eliminate $\mathbf{u}'$ from the above formulation:
\begin{equation}
    \mathbf{u}' = \tau_\text{M}\left(\mathbf{f} - \mathbf{a}\cdot\nabla\mathbf{u}^h + \nabla\cdot(2\nu\nabla^s\mathbf{u}^h) - \nabla p^h - \nabla p'\right)\text{ .}
\end{equation}
We can also replace the fine-scale pressure field $p'$ in the formulation with the total pressure $\tilde{p} = p^h + p' \in \tilde{\mathcal{Q}}$ where $\tilde{\mathcal{Q}} := \mathcal{Q}^h + \mathcal{Q}'$.  Note that since $\mathcal{Q}^h \subseteq \mathcal{Q}'$, $\tilde{\mathcal{Q}} = \mathcal{Q}'$, but we elect to use the notation $\tilde{\mathcal{Q}}$ in order to distinguish between total pressure and fine-scale pressure.  Finally, we can eliminate the coarse-scale pressure field $p^h$ by restricting the coarse-scale velocity trial and test functions to the weakly divergence-free subspace
$$\mathring{\mathcal{V}}^h := \left\{ \mathbf{v}^h \in \mathcal{V}^h : b(\mathbf{v}^h,q^h) =0 \text{ for all } q^h \in \mathcal{Q}^h \right\}.$$
The aforementioned modifications result in the following reduced formulation:  Find $(\mathbf{u}^h,\tilde{p})\in\mathring{\mathcal{V}}^h\times\tilde{\mathcal{Q}}$ such that, for all $(\mathbf{v}^h,\tilde{q})\in\mathring{\mathcal{V}}^h\times\tilde{\mathcal{Q}}$,
\begin{equation}
    A_\text{red}\left((\mathbf{u}^h,\tilde{p}),(\mathbf{v}^h,\tilde{q})\right) = (\mathbf{f},\mathbf{v}^h) + \left(\mathbf{f}, \tau_\text{M} (\mathbf{a}\cdot\nabla\mathbf{v}^h + \nabla \tilde{q})\right)\text{ ,} \label{eq:reduced}
\end{equation}
where
\begin{align}
    \nonumber &A_\text{red}\left((\mathbf{u}^h,\tilde{p}),(\mathbf{v}^h,\tilde{q})\right)\\
    \nonumber&~~~=c(\mathbf{a},\mathbf{u}^h,\mathbf{v}^h) + k(\mathbf{u}^h,\mathbf{v}^h)
    +(\tau_\text{C}\nabla\cdot\mathbf{u}^h,\nabla\cdot\mathbf{v}^h)
    +\left(\mathbf{a}\cdot\nabla\mathbf{u}^h - \nabla\cdot(2\nu\nabla^s\mathbf{u}^h) + 
    \nabla \tilde{p},\tau_\text{M}(\mathbf{a}\cdot\nabla\mathbf{v}^h + \nabla \tilde{q})\right)\text{ .}
\end{align}
While the coarse-scale and fine-scale pressure fields $p^h$ and $p'$ serve as Lagrange multipliers associated with the mass conservation constraint in the full formulation given by \eqref{eq:full}, the total pressure field $\tilde{p}$ is determined from momentum conservation via a Poisson-like problem in the above reduced formulation.

To proceed forward, we make a fundamental assumption regarding the coarse-scale spaces $\mathcal{V}^h$ and $\mathcal{Q}^h$.

\begin{assumption}\label{ass3}
The coarse-scale spaces $\mathcal{V}^h$ and $\mathcal{Q}^h$ are divergence-conforming, i.e., $\nabla \cdot \mathcal{V}^h \subseteq \mathcal{Q}^h$.
\end{assumption}

Under the above assumption, discretely divergence-free coarse-scale velocity fields are pointwise divergence-free.  That is,
$$\mathring{\mathcal{V}}^h \equiv \left\{ \mathbf{v}^h \in \mathcal{V}^h : \nabla \cdot \mathbf{v}^h = 0 \right\}.$$
The following consistency result holds under such an assumption.

\begin{lemma}[Consistency]\label{consistency}
Provided that Assumption \ref{ass3} is satisfied and the exact solution $(\mathbf{u},p)\in \mathcal{V} \times \mathcal{Q}$ to the Oseen problem lies in $(H^2(\Omega))^d \times H^1(\Omega)$, it holds that
\begin{equation}
    A_\text{red}\left((\mathbf{u} - \mathbf{u}^h,p - \tilde{p}),(\mathbf{v}^h,\tilde{q})\right) = 0
\end{equation}
for all $(\mathbf{v}^h,\tilde{q})\in\mathring{\mathcal{V}}^h\times\mathcal{Q}'$.
\end{lemma}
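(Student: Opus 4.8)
The plan is to establish Galerkin orthogonality by showing that the exact solution $(\mathbf{u},p)$, inserted into the reduced form, reproduces exactly the right-hand side of \eqref{eq:reduced}; the claim then follows at once from bilinearity of $A_\text{red}$ in its first argument together with the defining relation for the discrete solution $(\mathbf{u}^h,\tilde{p})$. Note first that $A_\text{red}((\mathbf{w},r),(\mathbf{v}^h,\tilde{q}))$ is well-defined whenever $\mathbf{w}\in(H^2(\Omega))^d$ and $r\in H^1(\Omega)$, which is exactly the regularity hypothesis on $(\mathbf{u},p)$, so the left-hand side of the lemma makes sense.

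First I would recover the strong form of the Oseen equations. Since $(\mathbf{u},p)\in(H^2(\Omega))^d\times H^1(\Omega)$, I can integrate by parts in \eqref{eq:oseen-weak}: using symmetry of $\nabla^s\mathbf{u}$ and the vanishing boundary trace of $\mathbf{v}\in\mathcal{V}$, $k(\mathbf{u},\mathbf{v}) = (2\nu\nabla^s\mathbf{u},\nabla\mathbf{v}) = -(\nabla\cdot(2\nu\nabla^s\mathbf{u}),\mathbf{v})$, and $-b(\mathbf{v},p) = (\nabla p,\mathbf{v})$. Taking $q=0$, \eqref{eq:oseen-weak} becomes $(\mathbf{a}\cdot\nabla\mathbf{u} - \nabla\cdot(2\nu\nabla^s\mathbf{u}) + \nabla p - \mathbf{f},\mathbf{v}) = 0$ for all $\mathbf{v}\in\mathcal{V}$. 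Density of $\mathcal{V}=\left(H_0^1(\Omega)\right)^d$ in $\left(L^2(\Omega)\right)^d$ then yields the $L^2$ momentum identity $\mathbf{a}\cdot\nabla\mathbf{u} - \nabla\cdot(2\nu\nabla^s\mathbf{u}) + \nabla p = \mathbf{f}$, while $\nabla\cdot\mathbf{u}=0$ holds pointwise by definition of $\mathcal{Q}$ and the constraint equation.

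Next I would evaluate $A_\text{red}((\mathbf{u},p),(\mathbf{v}^h,\tilde{q}))$ term by term. The grad-div contribution $(\tau_\text{C}\nabla\cdot\mathbf{u},\nabla\cdot\mathbf{v}^h)$ vanishes since $\nabla\cdot\mathbf{u}=0$. In the stabilization bracket, the first factor $\mathbf{a}\cdot\nabla\mathbf{u} - \nabla\cdot(2\nu\nabla^s\mathbf{u}) + \nabla p$ equals $\mathbf{f}$ by the $L^2$ momentum identity, so this term contributes precisely $\left(\mathbf{f},\tau_\text{M}(\mathbf{a}\cdot\nabla\mathbf{v}^h+\nabla\tilde{q})\right)$, matching the second piece of the right-hand side of \eqref{eq:reduced}. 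For the remaining terms $c(\mathbf{a},\mathbf{u},\mathbf{v}^h)+k(\mathbf{u},\mathbf{v}^h)$, I would return to the weak form \eqref{eq:oseen-weak} with $\mathbf{v}=\mathbf{v}^h\in\mathring{\mathcal{V}}^h\subset\mathcal{V}$ and $q=0$, which gives $c(\mathbf{a},\mathbf{u},\mathbf{v}^h)+k(\mathbf{u},\mathbf{v}^h) = (\mathbf{f},\mathbf{v}^h) + b(\mathbf{v}^h,p)$. This is exactly where Assumption \ref{ass3} is used: it forces $\mathring{\mathcal{V}}^h$ to consist of \emph{pointwise} divergence-free fields, so $b(\mathbf{v}^h,p) = (\nabla\cdot\mathbf{v}^h,p) = 0$ even though $p$ is the continuous pressure rather than a member of $\mathcal{Q}^h$. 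Hence $c(\mathbf{a},\mathbf{u},\mathbf{v}^h)+k(\mathbf{u},\mathbf{v}^h) = (\mathbf{f},\mathbf{v}^h)$, the first piece of the right-hand side. Adding the contributions, $A_\text{red}((\mathbf{u},p),(\mathbf{v}^h,\tilde{q})) = (\mathbf{f},\mathbf{v}^h) + \left(\mathbf{f},\tau_\text{M}(\mathbf{a}\cdot\nabla\mathbf{v}^h+\nabla\tilde{q})\right)$.

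Finally I would subtract the defining relation \eqref{eq:reduced} for $(\mathbf{u}^h,\tilde{p})$ from this identity and invoke bilinearity of $A_\text{red}$ to conclude $A_\text{red}((\mathbf{u}-\mathbf{u}^h,p-\tilde{p}),(\mathbf{v}^h,\tilde{q}))=0$ for all $(\mathbf{v}^h,\tilde{q})\in\mathring{\mathcal{V}}^h\times\mathcal{Q}'$. I expect the only genuinely delicate points to be (i) the use of the $H^2\times H^1$ regularity to legitimize the integration by parts producing the $L^2$ strong form, so that the bracket involving $\nabla\cdot(2\nu\nabla^s\mathbf{u})$ is both meaningful and exactly consistent, and (ii) the recognition that Assumption \ref{ass3} — strictly stronger than $\mathcal{Q}^h$-orthogonality of $\mathring{\mathcal{V}}^h$ — is precisely what annihilates $b(\mathbf{v}^h,p)$ against the continuous pressure; the rest is routine term matching.
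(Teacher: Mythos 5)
Your proof is correct and follows essentially the same route as the paper's: show that the exact solution satisfies the reduced variational identity when tested against $\mathring{\mathcal{V}}^h\times\tilde{\mathcal{Q}}$ (with Assumption \ref{ass3} supplying the pointwise divergence-freeness of $\mathbf{v}^h$ needed to kill the pressure coupling), then subtract the defining equation of the discrete solution. The only difference is that you verify term by term the identity $A_\text{red}((\mathbf{u},p),(\mathbf{v},q)) = (\mathbf{f},\mathbf{v}) + (\mathbf{f},\tau_\text{M}(\mathbf{a}\cdot\nabla\mathbf{v}+\nabla q))$, which the paper simply asserts; this is a welcome addition, not a different argument.
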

\begin{proof}
If the solution $(\mathbf{u},p)\in \mathcal{V} \times \mathcal{Q}$ lies in $(H^2(\Omega))^d \times H^1(\Omega)$, it holds that
\begin{equation}
    A_\text{red}\left((\mathbf{u},p),(\mathbf{v},q)\right) = (\mathbf{f},\mathbf{v}) + \left(\mathbf{f}, \tau_\text{M} (\mathbf{a}\cdot\nabla\mathbf{v} + \nabla q)\right)
\end{equation}
for all $(\mathbf{v},q) \in \mathring{\mathcal{V}} \times \mathcal{Q}$ where $\mathring{\mathcal{V}} := \left\{ \mathbf{v} \in \mathcal{V} : \nabla \cdot \mathbf{v} = 0 \right\}$.  Since Assumption \ref{ass3} is satisfied, $\mathring{\mathcal{V}}^h \subset \mathring{\mathcal{V}}$, and hence
\begin{equation}
    A_\text{red}\left((\mathbf{u},p),(\mathbf{v}^h,\tilde{q})\right) = (\mathbf{f},\mathbf{v}^h) + \left(\mathbf{f}, \tau_\text{M} (\mathbf{a}\cdot\nabla\mathbf{v}^h + \nabla \tilde{q})\right) \label{eq:cons1}
\end{equation}
for all $(\mathbf{v}^h,\tilde{q})\in\mathring{\mathcal{V}}^h\times\tilde{\mathcal{Q}}$.  By construction,
\begin{equation}
    A_\text{red}\left((\mathbf{u}^h,\tilde{p}),(\mathbf{v}^h,\tilde{q})\right) = (\mathbf{f},\mathbf{v}^h) + \left(\mathbf{f}, \tau_\text{M} (\mathbf{a}\cdot\nabla\mathbf{v}^h + \nabla \tilde{q})\right) \label{eq:cons2}
\end{equation}
for all $(\mathbf{v}^h,q')\in\mathring{\mathcal{V}}^h\times\mathcal{Q}'$.  The final result follows by subtracting \eqref{eq:cons2} from \eqref{eq:cons1}.
\end{proof}

It should be noted that the above lemma does not hold if Assumption \ref{ass3} is not satisfied.  This is due to the fact that $\mathring{\mathcal{V}}^h \not\subset \mathring{\mathcal{V}}$ for a velocity-pressure pair that is not divergence-conforming.

To continue, we introduce the norm
\begin{align}
     \nonumber\vertiii{(\mathbf{v},q)}^2 =&~k(\mathbf{v},\mathbf{v})
     +\left(\mathbf{a}\cdot\nabla\mathbf{v} + \nabla q,\tau_\text{M}(\mathbf{a}\cdot\nabla\mathbf{v}+\nabla q)\right)\text{ .}
\end{align}
The following coercivity result then holds.

\begin{lemma}[Coercivity]\label{coercivity}
Provided that Assumptions \ref{ass1} and \ref{ass2} are satisfied, it holds that
\begin{equation}
    A_\text{red}\left((\mathbf{v}^h,\tilde{q}),(\mathbf{v}^h,\tilde{q})\right) \geq \frac{1}{2} \vertiii{(\mathbf{v}^h,\tilde{q})}^2
\end{equation}
for all $(\mathbf{v}^h,\tilde{q})\in \mathring{\mathcal{V}}^h\times\tilde{\mathcal{Q}}$.
\end{lemma}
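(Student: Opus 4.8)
The plan is to test the reduced bilinear form on the diagonal, i.e.\ to set $\mathbf{u}^h=\mathbf{v}^h$ and $\tilde{p}=\tilde{q}$ in $A_\text{red}$, and to identify which terms reproduce the triple-norm and which must be absorbed. Expanding $A_\text{red}((\mathbf{v}^h,\tilde{q}),(\mathbf{v}^h,\tilde{q}))$, the convective term $c(\mathbf{a},\mathbf{v}^h,\mathbf{v}^h)$ vanishes by skew-symmetry, since $\mathbf{a}$ is solenoidal with vanishing trace; the grad-div term $(\tau_\text{C}\nabla\cdot\mathbf{v}^h,\nabla\cdot\mathbf{v}^h)$ is non-negative and may be discarded; and
\begin{equation}
k(\mathbf{v}^h,\mathbf{v}^h) + \left(\mathbf{a}\cdot\nabla\mathbf{v}^h + \nabla\tilde{q},\tau_\text{M}(\mathbf{a}\cdot\nabla\mathbf{v}^h + \nabla\tilde{q})\right) = \vertiii{(\mathbf{v}^h,\tilde{q})}^2\text{ .}
\end{equation}
What remains is the single ``inconsistent'' viscous cross-term, so that
\begin{equation}
A_\text{red}\left((\mathbf{v}^h,\tilde{q}),(\mathbf{v}^h,\tilde{q})\right) \geq \vertiii{(\mathbf{v}^h,\tilde{q})}^2 - \left(\nabla\cdot(2\nu\nabla^s\mathbf{v}^h),\tau_\text{M}(\mathbf{a}\cdot\nabla\mathbf{v}^h + \nabla\tilde{q})\right)\text{ ,}
\end{equation}
and everything hinges on controlling this cross-term.

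I would bound it exactly as in the proof of Lemma \ref{energy}. Young's inequality gives
\begin{equation}
\left(\nabla\cdot(2\nu\nabla^s\mathbf{v}^h),\tau_\text{M}(\mathbf{a}\cdot\nabla\mathbf{v}^h + \nabla\tilde{q})\right) \leq \tfrac12\left(\tau_\text{M}\nabla\cdot(2\nu\nabla^s\mathbf{v}^h),\nabla\cdot(2\nu\nabla^s\mathbf{v}^h)\right) + \tfrac12\left(\tau_\text{M}(\mathbf{a}\cdot\nabla\mathbf{v}^h + \nabla\tilde{q}),\mathbf{a}\cdot\nabla\mathbf{v}^h + \nabla\tilde{q}\right)\text{ ,}
\end{equation}
where the first inner product on the right is understood element-by-element. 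Applying Assumption \ref{ass1} (the inverse estimate) followed by Assumption \ref{ass2} ($\tau_\text{M}\leq h_e^2/(C_\text{inv}\nu)$ on each $\Omega^e$) and summing over elements yields $\left(\tau_\text{M}\nabla\cdot(2\nu\nabla^s\mathbf{v}^h),\nabla\cdot(2\nu\nabla^s\mathbf{v}^h)\right) \leq (\nu\nabla^s\mathbf{v}^h,\nabla^s\mathbf{v}^h) = \tfrac12 k(\mathbf{v}^h,\mathbf{v}^h)$, so the first Young term is at most $\tfrac14 k(\mathbf{v}^h,\mathbf{v}^h)$ and the second is exactly half the advective--diffusive part of the triple-norm.

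Substituting these bounds gives
\begin{equation}
A_\text{red}\left((\mathbf{v}^h,\tilde{q}),(\mathbf{v}^h,\tilde{q})\right) \geq \tfrac34 k(\mathbf{v}^h,\mathbf{v}^h) + \tfrac12\left(\mathbf{a}\cdot\nabla\mathbf{v}^h + \nabla\tilde{q},\tau_\text{M}(\mathbf{a}\cdot\nabla\mathbf{v}^h + \nabla\tilde{q})\right) \geq \tfrac12\vertiii{(\mathbf{v}^h,\tilde{q})}^2\text{ ,}
\end{equation}
which is the claim. The only genuinely delicate ingredient is precisely the one already at the heart of Lemma \ref{energy}: the inconsistent viscous contribution arising from the fine-scale model must be dominated by the physical dissipation, and this is exactly what the inverse estimate together with the cap on $\tau_\text{M}$ provide; since we are only proving coercivity on the diagonal, no inf-sup-type argument is needed. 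I note in passing that Assumption \ref{ass3} is not actually invoked in this estimate --- it enters the analysis only by ensuring that $\mathring{\mathcal{V}}^h$ is the pointwise divergence-free space appearing in the reduced formulation --- so the bound above in fact holds for merely weakly divergence-free $\mathbf{v}^h$, the non-negative grad-div term being discarded in either case.
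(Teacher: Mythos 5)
Your proof is correct and follows essentially the same route as the paper's: expand $A_\text{red}$ on the diagonal, kill the convective term by skew-symmetry, isolate the viscous cross-term, and absorb it via Young's inequality together with the element-wise inverse estimate of Assumption \ref{ass1} and the bound on $\tau_\text{M}$ from Assumption \ref{ass2}. Your handling is in fact marginally more careful than the paper's (you explicitly discard the non-negative grad-div term rather than omitting it, and you track the factor $\left(\nu\nabla^s\mathbf{v}^h,\nabla^s\mathbf{v}^h\right)=\tfrac12 k(\mathbf{v}^h,\mathbf{v}^h)$, giving the slightly sharper intermediate bound $\tfrac34 k$), but the argument and the final constant $\tfrac12$ are identical.
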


\begin{proof}
Let $(\mathbf{v}^h,\tilde{q})\in \mathring{\mathcal{V}}^h\times\tilde{\mathcal{Q}}$.  Expand
\begin{equation}
A_\text{red}\left((\mathbf{v}^h,\tilde{q}),(\mathbf{v}^h,\tilde{q})\right) = c(\mathbf{a},\mathbf{v}^h,\mathbf{v}^h) + k(\mathbf{v}^h,\mathbf{v}^h)
    +\left(\mathbf{a}\cdot\nabla\mathbf{v}^h - \nabla\cdot(2\nu\nabla^s\mathbf{v}^h) + 
    \nabla \tilde{q},\tau_\text{M}(\mathbf{a}\cdot\nabla\mathbf{v}^h + \nabla \tilde{q})\right)\text{ .}
\end{equation}
It is easily shown that $c(\mathbf{a},\mathbf{v}^h,\mathbf{v}^h) = 0$, so it follows that
\begin{align}
A_\text{red}\left((\mathbf{v}^h,\tilde{q}),(\mathbf{v}^h,\tilde{q})\right) = \vertiii{(\mathbf{v}^h,\tilde{q})}^2
    -\left(\nabla\cdot(2\nu\nabla^s\mathbf{v}^h),\tau_\text{M}(\mathbf{a}\cdot\nabla\mathbf{v}^h + \nabla \tilde{q})\right) \text{ .}
\end{align}
Using Young's inequality,
\begin{align}
    \left(\nabla\cdot(2\nu\nabla^s\mathbf{v}^h),\tau_\text{M}(\mathbf{a}\cdot\nabla\mathbf{v}^h + \nabla \tilde{q})\right) \leq{}& \frac{1}{2}\left(\nabla\cdot(2\nu\nabla^s\mathbf{v}^h),\tau_\text{M}\nabla\cdot(2\nu\nabla^s\mathbf{v}^h)\right) + \nonumber \\
    & \frac{1}{2}\left(\mathbf{a}\cdot\nabla\mathbf{v}^h + \nabla \tilde{q},\tau_\text{M}(\mathbf{a}\cdot\nabla\mathbf{v}^h + \nabla \tilde{q})\right)\text{ .}
\end{align}
Since Assumptions \ref{ass1} and \ref{ass2} are satisfied,
\begin{align}
    \left(\nabla\cdot(2\nu\nabla^s\mathbf{v}^h),\tau_\text{M}\nabla\cdot(2\nu\nabla^s\mathbf{v}^h)\right)\leq k(\mathbf{v}^h,\mathbf{v}^h)\text{ ,}
\end{align}
thus
\begin{equation}
    \left(\nabla\cdot(2\nu\nabla^s\mathbf{v}^h),\tau_\text{M}(\mathbf{a}\cdot\nabla\mathbf{v}^h + \nabla \tilde{q})\right) \leq \frac{1}{2} \vertiii{(\mathbf{v}^h,\tilde{q})}^2\text{ .}
\end{equation}
The desired result immediately follows.
\end{proof}

Using the additional norm
\begin{align}
     \nonumber\vertiii{(\mathbf{v},q)}_+^2 =&~\nonumber\vertiii{(\mathbf{v},q)}^2
     +\left(\nabla\cdot(2\nu\nabla^s\mathbf{v}),\tau_\text{M}\nabla\cdot(2\nu\nabla^s\mathbf{v})\right)
     +\left(\mathbf{v},\tau_\text{M}^{-1}\mathbf{v}\right)\text{ ,}
\end{align}
we can show the following continuity result.

\begin{lemma}[Continuity]\label{continuity}
It holds that
\begin{equation}
    A_\text{red}\left((\mathbf{w},r),(\mathbf{v}^h,\tilde{q})\right) \leq 3 \vertiii{(\mathbf{w},r)}_+ \vertiii{(\mathbf{v}^h,\tilde{q})} 
\end{equation}
for all $(\mathbf{w},r)\in \left(\mathring{\mathcal{V}} \cap H^2(\Omega))^d \right) \times \left( \mathcal{Q} \cap H^1(\Omega) \right)$ and $(\mathbf{v}^h,\tilde{q})\in \mathring{\mathcal{V}}^h\times\tilde{\mathcal{Q}}$.
\end{lemma}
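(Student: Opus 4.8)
The plan is to rewrite $A_\text{red}((\mathbf{w},r),(\mathbf{v}^h,\tilde{q}))$ so that it pairs, term by term, quantities controlled by $\vertiii{\,\cdot\,}_+$ against quantities controlled by $\vertiii{\,\cdot\,}$, and then to close with the Cauchy--Schwarz inequality. Three of the four pieces of $A_\text{red}$ are immediately of this form. The grad-div contribution $(\tau_\text{C}\nabla\cdot\mathbf{w},\nabla\cdot\mathbf{v}^h)$ vanishes outright, since $\mathbf{w}\in\mathring{\mathcal{V}}$ is pointwise divergence-free. The viscous term obeys $|k(\mathbf{w},\mathbf{v}^h)|\le k(\mathbf{w},\mathbf{w})^{1/2}k(\mathbf{v}^h,\mathbf{v}^h)^{1/2}$ by Cauchy--Schwarz for the symmetric form $k$, and the two factors are bounded by $\vertiii{(\mathbf{w},r)}_+$ and $\vertiii{(\mathbf{v}^h,\tilde{q})}$ respectively. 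The ``least-squares'' term $\bigl(\mathbf{a}\cdot\nabla\mathbf{w}-\nabla\cdot(2\nu\nabla^s\mathbf{w})+\nabla r,\,\tau_\text{M}(\mathbf{a}\cdot\nabla\mathbf{v}^h+\nabla\tilde{q})\bigr)$ has $\mathbf{v}^h$-factor $\tau_\text{M}^{1/2}(\mathbf{a}\cdot\nabla\mathbf{v}^h+\nabla\tilde{q})$, bounded by $\vertiii{(\mathbf{v}^h,\tilde{q})}$, and its $\mathbf{w}$-factor is bounded by the triangle inequality by $\Vert\tau_\text{M}^{1/2}(\mathbf{a}\cdot\nabla\mathbf{w}+\nabla r)\Vert+\Vert\tau_\text{M}^{1/2}\nabla\cdot(2\nu\nabla^s\mathbf{w})\Vert+\Vert\tau_\text{M}^{-1/2}\mathbf{w}\Vert$, each summand being bounded by $\vertiii{(\mathbf{w},r)}_+$; this is the only place the hypotheses $\mathbf{w}\in(H^2(\Omega))^d$ and $r\in H^1(\Omega)$ are used, namely to make $\nabla\cdot(2\nu\nabla^s\mathbf{w})$ and $\nabla r$ square-integrable.

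The one step that is not bookkeeping is the Galerkin advection term $c(\mathbf{a},\mathbf{w},\mathbf{v}^h)=(\mathbf{a}\cdot\nabla\mathbf{w},\mathbf{v}^h)$: it cannot be estimated by pairing $\mathbf{v}^h$ in $L^2$, because $\vertiii{(\mathbf{v}^h,\tilde{q})}$ does \emph{not} control $\Vert\tau_\text{M}^{-1/2}\mathbf{v}^h\Vert$ (only the stronger $+$-norm does). I would integrate by parts: using $\nabla\cdot\mathbf{a}=0$ and the homogeneous boundary conditions, $c(\mathbf{a},\mathbf{w},\mathbf{v}^h)=-c(\mathbf{a},\mathbf{v}^h,\mathbf{w})=-(\mathbf{a}\cdot\nabla\mathbf{v}^h,\mathbf{w})$; then, using $\nabla\cdot\mathbf{w}=0$ and $\mathbf{w}|_{\partial\Omega}=\bm{0}$ once more, $(\nabla\tilde{q},\mathbf{w})=0$, so that $c(\mathbf{a},\mathbf{w},\mathbf{v}^h)=-(\mathbf{w},\,\mathbf{a}\cdot\nabla\mathbf{v}^h+\nabla\tilde{q})$. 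Now the advective part of the test pair appears precisely in the combination the plain norm measures, while $\mathbf{w}$ sits in $L^2$, which $\vertiii{\,\cdot\,}_+$ does measure via its $(\mathbf{w},\tau_\text{M}^{-1}\mathbf{w})$ term. Folding this into the least-squares term yields the compact identity
\begin{equation}
A_\text{red}((\mathbf{w},r),(\mathbf{v}^h,\tilde{q})) = k(\mathbf{w},\mathbf{v}^h) + \bigl(\tau_\text{M}(\mathbf{a}\cdot\nabla\mathbf{w}-\nabla\cdot(2\nu\nabla^s\mathbf{w})+\nabla r)-\mathbf{w},\;\mathbf{a}\cdot\nabla\mathbf{v}^h+\nabla\tilde{q}\bigr)\text{ ,} \nonumber
\end{equation}
from which the estimate follows.

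Estimating the two summands separately by Cauchy--Schwarz already gives the bound with constant $4$ (a $1$ from the viscous term and a $3$ from the three-term triangle inequality). To reach the stated constant $3$, I would instead apply a single Cauchy--Schwarz inequality in $\mathbb{R}^2$ to the pair of summands and invoke $(a+b+c)^2\le 3(a^2+b^2+c^2)$: the $\mathbf{w}$-side factor is then
\begin{equation}
\Bigl(k(\mathbf{w},\mathbf{w}) + \bigl(\Vert\tau_\text{M}^{1/2}(\mathbf{a}\cdot\nabla\mathbf{w}+\nabla r)\Vert+\Vert\tau_\text{M}^{1/2}\nabla\cdot(2\nu\nabla^s\mathbf{w})\Vert+\Vert\tau_\text{M}^{-1/2}\mathbf{w}\Vert\bigr)^2\Bigr)^{1/2} \le \sqrt{3}\,\vertiii{(\mathbf{w},r)}_+\text{ ,} \nonumber
\end{equation}
while the conjugate factor is exactly $\bigl(k(\mathbf{v}^h,\mathbf{v}^h)+\Vert\tau_\text{M}^{1/2}(\mathbf{a}\cdot\nabla\mathbf{v}^h+\nabla\tilde{q})\Vert^2\bigr)^{1/2}=\vertiii{(\mathbf{v}^h,\tilde{q})}$, so that $A_\text{red}((\mathbf{w},r),(\mathbf{v}^h,\tilde{q}))\le\sqrt{3}\,\vertiii{(\mathbf{w},r)}_+\,\vertiii{(\mathbf{v}^h,\tilde{q})}\le 3\,\vertiii{(\mathbf{w},r)}_+\,\vertiii{(\mathbf{v}^h,\tilde{q})}$. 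The main obstacle is precisely the step just isolated---recognizing that the Galerkin advection term has to be transposed onto the test function by integration by parts and then recombined with $\nabla\tilde{q}$ using $\nabla\cdot\mathbf{w}=0$, since $\Vert\mathbf{v}^h\Vert_{L^2}$ is invisible to the weak norm $\vertiii{\,\cdot\,}$. It is worth noting that, unlike Lemma \ref{coercivity}, this estimate needs neither Assumption \ref{ass1} nor Assumption \ref{ass2}: the viscous-residual term is simply absorbed into $\vertiii{\,\cdot\,}_+$ rather than traded against $k(\cdot,\cdot)$.
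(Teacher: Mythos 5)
Your proposal is correct and follows essentially the same route as the paper: the decisive step in both is to integrate the Galerkin advection term by parts onto the test function and, using $\nabla\cdot\mathbf{w}=0$, recombine it with $\nabla\tilde{q}$ so that $\mathbf{w}$ lands in $L^2$ against exactly the combination $\mathbf{a}\cdot\nabla\mathbf{v}^h+\nabla\tilde{q}$ controlled by $\vertiii{\,\cdot\,}$. The only difference is the final bookkeeping: the paper reaches the constant $3$ by grouping the viscous and positive least-squares terms into one Cauchy--Schwarz and adding two further unit contributions, whereas your $\mathbb{R}^2$ Cauchy--Schwarz plus $(a+b+c)^2\le 3(a^2+b^2+c^2)$ yields the slightly sharper constant $\sqrt{3}$.
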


\begin{proof}
Let $(\mathbf{w},r)\in \left(\mathring{\mathcal{V}} \cap H^2(\Omega))^d \right) \times \left( \mathcal{Q} \cap H^1(\Omega) \right)$ and  $(\mathbf{v}^h,\tilde{q})\in \mathring{\mathcal{V}}^h\times\tilde{\mathcal{Q}}$.  Expand
\begin{align}
    A_\text{red}\left((\mathbf{w},r),(\mathbf{v}^h,\tilde{q})\right) ={}& c(\mathbf{a},\mathbf{w},\mathbf{v}^h) + k(\mathbf{w},\mathbf{v}^h)
    +\left(\mathbf{a}\cdot\nabla\mathbf{w} - \nabla\cdot(2\nu\nabla^s\mathbf{w}) + 
    \nabla r,\tau_\text{M}(\mathbf{a}\cdot\nabla\mathbf{v}^h + \nabla \tilde{q})\right) \nonumber \\
    ={}& (\mathbf{a}\cdot\nabla\mathbf{w},\mathbf{v}^h) + (2\nu\nabla^s\mathbf{w},\nabla^s\mathbf{v}^h)\nonumber \\ 
    &+ \left(\mathbf{a}\cdot\nabla\mathbf{w} + 
    \nabla r,\tau_\text{M}(\mathbf{a}\cdot\nabla\mathbf{v}^h + \nabla \tilde{q})\right) - \left(\nabla\cdot(2\nu\nabla^s\mathbf{w}),\tau_\text{M}(\mathbf{a}\cdot\nabla\mathbf{v}^h + \nabla \tilde{q})\right)\text{ .}
\end{align}
Since $\nabla \cdot \mathbf{a} = 0$ and $\mathbf{w} \in \mathring{\mathcal{V}}$, it follows that
\begin{align}
    (\mathbf{a}\cdot\nabla\mathbf{w},\mathbf{v}^h) = -(\mathbf{w},\mathbf{a}\cdot\nabla\mathbf{v}^h) + (\nabla \cdot \mathbf{w},\tilde{q}) = -(\mathbf{w},\mathbf{a}\cdot\nabla\mathbf{v}^h + \nabla \tilde{q})\text{ .}
\end{align}
Consequently,
\begin{align}
    A_\text{red}\left((\mathbf{w},r),(\mathbf{v}^h,\tilde{q})\right) 
    ={}& -(\mathbf{w},\mathbf{a}\cdot\nabla\mathbf{v}^h + \nabla \tilde{q}) + (2\nu\nabla^s\mathbf{w},\nabla^s\mathbf{v}^h)\nonumber \\ 
    &+ \left(\mathbf{a}\cdot\nabla\mathbf{w} + 
    \nabla r,\tau_\text{M}(\mathbf{a}\cdot\nabla\mathbf{v}^h + \nabla \tilde{q})\right) - \left(\nabla\cdot(2\nu\nabla^s\mathbf{w}),\tau_\text{M}(\mathbf{a}\cdot\nabla\mathbf{v}^h + \nabla \tilde{q})\right)\text{ .}
\end{align}
By the Cauchy-Schwarz inequality,
\begin{align}
(2\nu\nabla^s\mathbf{w},\nabla^s\mathbf{v}^h) + \left(\mathbf{a}\cdot\nabla\mathbf{w} + 
    \nabla r,\tau_\text{M}(\mathbf{a}\cdot\nabla\mathbf{v}^h + \nabla \tilde{q})\right) \leq \vertiii{(\mathbf{w},r)} \vertiii{(\mathbf{v}^h,\tilde{q})} \leq \vertiii{(\mathbf{w},r)}_+ \vertiii{(\mathbf{v}^h,\tilde{q})}\text{ .}
\end{align}
Moreover,
\begin{align}
(\mathbf{w},\mathbf{a}\cdot\nabla\mathbf{v}^h + \nabla \tilde{q}) \leq \vertiii{(\mathbf{w},r)}_+ \vertiii{(\mathbf{v}^h,\tilde{q})}
\end{align}
and
\begin{align}
\left(\nabla\cdot(2\nu\nabla^s\mathbf{w}),\tau_\text{M}(\mathbf{a}\cdot\nabla\mathbf{v}^h + \nabla \tilde{q})\right) \leq \vertiii{(\mathbf{w},r)}_+ \vertiii{(\mathbf{v}^h,\tilde{q})}\text{ .}
\end{align}
The desired result immediately follows.
\end{proof}

Finally, using Lemmas \ref{consistency}-\ref{continuity}, we can show the following result.

\begin{theorem}[Error Estimate]\label{estimate}
Provided that Assumptions \ref{ass1}-\ref{ass3} are satisfied, it holds that
\begin{equation}
    \vertiii{(\mathbf{u} - \mathbf{u}^h,p - \tilde{p})} \leq 7 \inf_{\left( \mathbf{w}^h, \tilde{r} \right) \in \mathring{\mathcal{V}}^h \times \tilde{\mathcal{Q}}} \vertiii{(\mathbf{u} - \mathbf{w}^h,p - \tilde{r})}_+
\end{equation}
if the exact solution $(\mathbf{u},p)\in \mathcal{V} \times \mathcal{Q}$ to the Oseen problem lies in $(H^2(\Omega))^d \times H^1(\Omega)$.
\end{theorem}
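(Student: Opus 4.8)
The plan is to run the standard Céa-type (best-approximation) argument assembled from the three ingredients just proved: consistency (Lemma~\ref{consistency}), coercivity (Lemma~\ref{coercivity}), and continuity (Lemma~\ref{continuity}). First I would fix an arbitrary pair $(\mathbf{w}^h,\tilde{r})\in\mathring{\mathcal{V}}^h\times\tilde{\mathcal{Q}}$ and split the error as $(\mathbf{u}-\mathbf{u}^h,\,p-\tilde{p}) = (\mathbf{u}-\mathbf{w}^h,\,p-\tilde{r}) - (\mathbf{e}^h,\tilde{e})$, where $\mathbf{e}^h := \mathbf{u}^h-\mathbf{w}^h\in\mathring{\mathcal{V}}^h$ and $\tilde{e}:=\tilde{p}-\tilde{r}\in\tilde{\mathcal{Q}}$. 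The objective is to control the purely discrete component $\vertiii{(\mathbf{e}^h,\tilde{e})}$ by the best-approximation error and then conclude via the triangle inequality for $\vertiii{\cdot}$.

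Second, I would apply coercivity to $(\mathbf{e}^h,\tilde{e})$, giving $\frac{1}{2}\vertiii{(\mathbf{e}^h,\tilde{e})}^2 \le A_\text{red}\!\left((\mathbf{e}^h,\tilde{e}),(\mathbf{e}^h,\tilde{e})\right)$, and then use bilinearity of $A_\text{red}$ to write $(\mathbf{e}^h,\tilde{e}) = (\mathbf{u}-\mathbf{w}^h,\,p-\tilde{r}) - (\mathbf{u}-\mathbf{u}^h,\,p-\tilde{p})$ in the first argument. Since $\tilde{\mathcal{Q}}=\mathcal{Q}'$, the pair $(\mathbf{e}^h,\tilde{e})$ is an admissible test function in Lemma~\ref{consistency}, so the contribution $A_\text{red}\!\left((\mathbf{u}-\mathbf{u}^h,p-\tilde{p}),(\mathbf{e}^h,\tilde{e})\right)$ vanishes, leaving $A_\text{red}\!\left((\mathbf{e}^h,\tilde{e}),(\mathbf{e}^h,\tilde{e})\right) = A_\text{red}\!\left((\mathbf{u}-\mathbf{w}^h,p-\tilde{r}),(\mathbf{e}^h,\tilde{e})\right)$.

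Third, I would invoke continuity. The trial pair $(\mathbf{u}-\mathbf{w}^h,\,p-\tilde{r})$ has the regularity required by Lemma~\ref{continuity}: $\mathbf{u}$ is solenoidal and lies in $(H^2(\Omega))^d$ by hypothesis, $\mathbf{w}^h\in\mathring{\mathcal{V}}^h\subset\mathring{\mathcal{V}}$ by Assumption~\ref{ass3}, $p\in\mathcal{Q}\cap H^1(\Omega)$, and $\tilde{r}\in\tilde{\mathcal{Q}}=\mathcal{Q}'\subset H^1(\Omega)$ (with the residual terms in $\vertiii{\cdot}_+$ read element-wise, as elsewhere in the paper). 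Hence $A_\text{red}\!\left((\mathbf{u}-\mathbf{w}^h,p-\tilde{r}),(\mathbf{e}^h,\tilde{e})\right)\le 3\,\vertiii{(\mathbf{u}-\mathbf{w}^h,p-\tilde{r})}_+\,\vertiii{(\mathbf{e}^h,\tilde{e})}$. Combining with the coercivity estimate and dividing by $\vertiii{(\mathbf{e}^h,\tilde{e})}$ gives $\vertiii{(\mathbf{e}^h,\tilde{e})}\le 6\,\vertiii{(\mathbf{u}-\mathbf{w}^h,p-\tilde{r})}_+$. Then the triangle inequality, together with the elementary bound $\vertiii{\cdot}\le\vertiii{\cdot}_+$ (the $+$-norm differs only by the nonnegative terms $(\nabla\cdot(2\nu\nabla^s\mathbf{v}),\tau_\text{M}\nabla\cdot(2\nu\nabla^s\mathbf{v}))$ and $(\mathbf{v},\tau_\text{M}^{-1}\mathbf{v})$), yields $\vertiii{(\mathbf{u}-\mathbf{u}^h,p-\tilde{p})}\le 7\,\vertiii{(\mathbf{u}-\mathbf{w}^h,p-\tilde{r})}_+$, and taking the infimum over $(\mathbf{w}^h,\tilde{r})\in\mathring{\mathcal{V}}^h\times\tilde{\mathcal{Q}}$ finishes the proof.

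As for difficulty: once Lemmas~\ref{consistency}--\ref{continuity} are in hand this is essentially a bookkeeping exercise, so I do not anticipate a substantive obstacle. The only points needing a little care are verifying that $(\mathbf{u}-\mathbf{w}^h,p-\tilde{r})$ meets the hypotheses of Lemma~\ref{continuity}—which hinges on the exact-solution regularity assumption and on Assumption~\ref{ass3} identifying $\mathring{\mathcal{V}}^h$ with a subspace of $\mathring{\mathcal{V}}$—and tracking constants so that the final factor is exactly $7 = 1 + 2\cdot 3$ (the $2$ from the coercivity constant $\frac{1}{2}$, the $3$ from continuity).
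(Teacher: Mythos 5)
Your proposal is correct and follows essentially the same route as the paper's proof: coercivity applied to the discrete difference $(\mathbf{u}^h-\mathbf{w}^h,\tilde{p}-\tilde{r})$, Galerkin orthogonality from Lemma~\ref{consistency} to swap in $(\mathbf{u}-\mathbf{w}^h,p-\tilde{r})$, continuity from Lemma~\ref{continuity}, and the triangle inequality with $\vertiii{\cdot}\le\vertiii{\cdot}_+$ to arrive at the constant $7=1+2\cdot 3$. The constant bookkeeping and the regularity/admissibility checks you flag match the paper exactly.
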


\begin{proof}
Let $\left( \mathbf{w}^h, \tilde{r} \right) \in \mathring{\mathcal{V}}^h \times \tilde{\mathcal{Q}}$.  By Lemma \ref{coercivity},
\begin{align}
    \frac{1}{2} \vertiii{(\mathbf{u}^h - \mathbf{w}^h,\tilde{p} - \tilde{r})}^2 \leq A_\text{red}\left((\mathbf{u}^h - \mathbf{w}^h,\tilde{p} - \tilde{r}),(\mathbf{u}^h - \mathbf{w}^h,\tilde{p} - \tilde{r})\right)\text{ .}
\end{align}
By Lemma \ref{consistency},
\begin{align}
    \frac{1}{2}\vertiii{(\mathbf{u}^h - \mathbf{w}^h,\tilde{p} - \tilde{r})}^2 \leq A_\text{red}\left((\mathbf{u} - \mathbf{w}^h,p - \tilde{r}),(\mathbf{u}^h - \mathbf{w}^h,\tilde{p} - \tilde{r})\right)\text{ .}
\end{align}
By Lemma \ref{continuity},
\begin{align}
    \frac{1}{2}\vertiii{(\mathbf{u}^h - \mathbf{w}^h,\tilde{p} - \tilde{r})}^2 \leq 3 \vertiii{(\mathbf{u} - \mathbf{w}^h,p - \tilde{r})}_+ \vertiii{(\mathbf{u}^h - \mathbf{w}^h,\tilde{p} - \tilde{r})} \text{ ,}
\end{align}
so
\begin{align}
\vertiii{(\mathbf{u}^h - \mathbf{w}^h,\tilde{p} - \tilde{r})} \leq 6 \vertiii{(\mathbf{u} - \mathbf{w}^h,p - \tilde{r})}_+\text{ .}
\end{align}
By the triangle inequality,
\begin{align}
\vertiii{(\mathbf{u} - \mathbf{u}^h,p - \tilde{p})} \leq \vertiii{(\mathbf{u}^h - \mathbf{w}^h,\tilde{p} - \tilde{r})} + \vertiii{(\mathbf{u} - \mathbf{w}^h,p - \tilde{r})} \leq 7 \vertiii{(\mathbf{u} - \mathbf{w}^h,p - \tilde{r})}_+\text{ .}
\end{align}
The desired result follows since $\left( \mathbf{w}^h, \tilde{r} \right) \in \mathring{\mathcal{V}}^h \times \tilde{\mathcal{Q}}$ is arbitrary.
\end{proof}

The above result indicates that our VMS-based formulation is quasi-optimal with respect to an SUPG/PSPG-like norm for divergence-conforming discretizations of the Oseen problem.  Note that the above result also indicates that while the velocity error depends on the approximation properties of the fine-scale pressure space, it does not depend on the approximation properties of the coarse-scale pressure space.  Thus any spurious influence of the pressure field on the coarse-scale velocity field may be mitigated by refining just the fine-scale pressure space rather than both the coarse-scale and fine-scale pressure spaces.  This is an important observation as one can refine the fine-scale pressure space without upsetting inf-sup stability while the same cannot be said for the coarse-scale pressure space.

In order to extract convergence rates for our VMS-based formulation, we must make additional assumptions regarding the approximation power of the velocity and pressure spaces $\mathcal{V}^h$ and $\tilde{\mathcal{Q}}$ and the specific form of the stabilization parameter $\tau_\text{M}$.

\begin{assumption}\label{ass4}
There exist integers $k_v$ and $k_q$, interpolation operators $\mathcal{I}_v : \mathring{\mathcal{V}} \rightarrow \mathring{\mathcal{V}}^h$ and $\mathcal{I}_q : \mathcal{Q} \rightarrow \tilde{\mathcal{Q}}$, and an interpolation constant $C_\text{interp}$ independent of the global mesh size $h = \max_e h_e$ such that, for every $\mathbf{v} \in \mathring{\mathcal{V}} \cap (H^{k_v+1}(\Omega))^d$ and integer $0 \leq l \leq k_v + 1$,
\begin{equation}
\sum_{e=1}^{n_{el}} | \mathbf{v} - \mathcal{I}_v \mathbf{v} |^2_{(H^l(\Omega^e))^d} \leq C_\text{interp} h^{2(k_v+1-l)} | \mathbf{v} |^2_{(H^{k_v+1}(\Omega))^d},
\end{equation}
and, for every $q \in \mathcal{Q} \cap H^{k_q+1}(\Omega)$ and integer $0 \leq l \leq k_q + 1$,
\begin{equation}
\sum_{e=1}^{n_{el}} | q - \mathcal{I}_q q |^2_{H^l(\Omega^e)} \leq C_\text{interp} h^{2(k_q+1-l)} | q |^2_{H^{k_q+1}(\Omega)}.
\end{equation}
We refer to $k_v$ and $k_q$ as the degrees of the coarse-scale velocity and fine-scale pressure spaces, respectively.
\end{assumption}

\begin{assumption}\label{ass5}
For each element $\Omega^e \in \mathcal{M}$,
\begin{equation}
     \tau_\text{M} = \frac{h_e}{2 |\mathbf{a}|} f(\gamma_e),
\end{equation}
where
\begin{equation}
    \gamma_e =  \frac{| \mathbf{a} | h_e}{2\nu}
\end{equation}
is the element P\'{e}clet number and $f$ is a monotone function satisfying
\begin{equation}
    f(\gamma_e) \leq \min\left\{ 1, \frac{4 \gamma_e}{C_\text{inv}} \right\},
\end{equation}
where $C_\text{inv}$ is the constant associated with Assumption \ref{ass1}.
\end{assumption}

Assumption \ref{ass5} holds for standard choices for the stabilization parameter \cite{Franca1992}, and Assumption \ref{ass2} is automatically satisfied if Assumption \ref{ass5} is satisfied.  Assumption \ref{ass4}, however, is a non-standard assumption.  Satisfaction of Assumption \ref{ass4} requires the existence of a divergence-conforming projection operator for the coarse-scale velocity space.  Such projection operators exist, for instance, for divergence-conforming B-spline discretizations \cite{Buffa2011} as well as a selection of divergence-conforming finite element discretizations \cite{scott1985norm,guzman2014conforming}.  It should be noted, however, that divergence-conforming finite element discretizations typically employ discontinuous pressure approximation.
Provided a divergence-conforming projection operator does exist, Assumption \ref{ass4} typically holds for quasi-uniform meshes $\mathcal{M}$.  Equipped with the above assumptions, we have the following result.

\begin{theorem}[Rates of Convergence]\label{convergence}
Suppose that the advection velocity $\mathbf{a}$ and the kinematic viscosity $\nu$ are constant, and suppose that the mesh $\mathcal{M}$ is quasi-uniform.  Provided that Assumptions \ref{ass1}-\ref{ass5} are satisfied, it holds that
\begin{equation}
    \vertiii{(\mathbf{u} - \mathbf{u}^h,p - \tilde{p})}^2 \leq C_\text{bound} \left( \max\{ | \mathbf{a} | h, \nu \} h^{2k_v} | \mathbf{u} |^2_{(H^{k_v+1}(\Omega))^d} + \max\{ | \mathbf{a} | h, \nu \}^{-1} h^{2k_q+2} | p |^2_{H^{k_q+1}(\Omega)} \right)\text{ ,}
\end{equation}
where $C_\text{bound}$ is a dimensionless constant independent of the global mesh size $h = \max_e h_e$, if the exact solution $(\mathbf{u},p)\in \mathcal{V} \times \mathcal{Q}$ to the Oseen problem lies in $(H^{k_v+1}(\Omega))^d \times H^{k_q+1}(\Omega)$.
\end{theorem}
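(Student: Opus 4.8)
The plan is to insert a concrete approximant into the quasi-optimality bound of Theorem~\ref{estimate} and then estimate the resulting interpolation error in the $\vertiii{\cdot}_+$ norm using the explicit form of the stabilization parameter. Because the infimum in Theorem~\ref{estimate} ranges over all of $\mathring{\mathcal{V}}^h\times\tilde{\mathcal{Q}}$, it suffices to test with the single pair $\mathbf{w}^h=\mathcal{I}_v\mathbf{u}$, $\tilde{r}=\mathcal{I}_q p$ furnished by Assumption~\ref{ass4}; this is admissible because the Oseen solution is pointwise divergence-free (hence $\mathbf{u}\in\mathring{\mathcal{V}}$) and, by hypothesis, $\mathbf{u}\in(H^{k_v+1}(\Omega))^d$ and $p\in H^{k_q+1}(\Omega)$, which in particular supplies the regularity $\mathbf{u}\in(H^2(\Omega))^d$, $p\in H^1(\Omega)$ needed for Theorem~\ref{estimate}. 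Writing $\mathbf{e}:=\mathbf{u}-\mathcal{I}_v\mathbf{u}$, $\epsilon:=p-\mathcal{I}_q p$, Theorem~\ref{estimate} yields $\vertiii{(\mathbf{u}-\mathbf{u}^h,p-\tilde{p})}^2\le 49\,\vertiii{(\mathbf{e},\epsilon)}_+^2$, so the entire theorem reduces to bounding $\vertiii{(\mathbf{e},\epsilon)}_+^2$ by the right-hand side of the asserted estimate, the factor $49$ being absorbed into $C_\text{bound}$.

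Next I would expand $\vertiii{(\mathbf{e},\epsilon)}_+^2$ into its elementary pieces and bound each one elementwise using Assumption~\ref{ass4} with the appropriate Sobolev index $l$, quasi-uniformity (so $h_e\simeq h$ and the element P\'{e}clet numbers satisfy $\gamma_e\simeq\gamma:=|\mathbf{a}|h/(2\nu)$ up to a fixed factor), and the elementary consequences $\tau_\text{M}\le h_e/(2|\mathbf{a}|)$ and $\tau_\text{M}\le h_e^2/(C_\text{inv}\nu)$ of Assumption~\ref{ass5}, i.e. $\tau_\text{M}\le C\,h^2/\max\{|\mathbf{a}|h,\nu\}$. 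Concretely: (i) $k(\mathbf{e},\mathbf{e})\le 2\nu\sum_e|\mathbf{e}|^2_{H^1(\Omega^e)}\le C\nu h^{2k_v}|\mathbf{u}|^2_{(H^{k_v+1}(\Omega))^d}$, using $l=1$; (ii) splitting $\|\mathbf{a}\cdot\nabla\mathbf{e}+\nabla\epsilon\|^2\le 2|\mathbf{a}|^2|\mathbf{e}|^2_{H^1}+2|\nabla\epsilon|^2$, the velocity part uses $\tau_\text{M}|\mathbf{a}|^2\le h_e|\mathbf{a}|/2$ to give $C|\mathbf{a}|h\cdot h^{2k_v}|\mathbf{u}|^2$, and the pressure part uses $\tau_\text{M}\le C\,h^2/\max\{|\mathbf{a}|h,\nu\}$ and $l=1$ to give $C\max\{|\mathbf{a}|h,\nu\}^{-1}h^{2k_q+2}|p|^2_{H^{k_q+1}(\Omega)}$; (iii) $\|\nabla\cdot(2\nu\nabla^s\mathbf{e})\|_{L^2(\Omega^e)}\le C\nu|\mathbf{e}|_{H^2(\Omega^e)}$ combined with $\tau_\text{M}\nu^2\le h_e^2\nu/C_\text{inv}$ and $l=2$ (which is where $\mathbf{u}\in H^2$ enters) yields $C\nu h^{2k_v}|\mathbf{u}|^2$; and (iv) the $\tau_\text{M}^{-1}$ term is $\sum_e\tau_\text{M}^{-1}\|\mathbf{e}\|^2_{L^2(\Omega^e)}$, and since for the standard stabilization parameter $\tau_\text{M}$ is not merely bounded above but \emph{comparable} to $h^2/\max\{|\mathbf{a}|h,\nu\}$ on every element, $\tau_\text{M}^{-1}\le C\max\{|\mathbf{a}|h,\nu\}/h^2$ and so, with $l=0$, this term is at most $C\max\{|\mathbf{a}|h,\nu\}h^{2k_v}|\mathbf{u}|^2$. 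Summing (i)--(iv) produces the target bound for $\vertiii{(\mathbf{e},\epsilon)}_+^2$ and completes the argument.

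The main difficulty is the stabilization-parameter bookkeeping, concentrated in the $\tau_\text{M}^{-1}$ piece of $\vertiii{\cdot}_+$. For the other contributions only an \emph{upper} bound $\tau_\text{M}\le C\,h^2/\max\{|\mathbf{a}|h,\nu\}$ is required, and this is immediate from $f(\gamma_e)\le\min\{1,4\gamma_e/C_\text{inv}\}$; but controlling $\tau_\text{M}^{-1}\|\mathbf{e}\|^2$ requires the matching \emph{lower} bound $\tau_\text{M}\ge c\,h^2/\max\{|\mathbf{a}|h,\nu\}$, equivalently the two-sided comparison $f(\gamma_e)\simeq\min\{1,\gamma_e\}$ that holds for the standard forms of $\tau_\text{M}$ cited in Assumption~\ref{ass5} --- this is the only place where more than the stated upper bound on $f$ is invoked. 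The remainder is arithmetic: keeping the advection-dominated regime $|\mathbf{a}|h\ge\nu$ and the diffusion-dominated regime $|\mathbf{a}|h<\nu$ separate when rewriting the minima defining $\tau_\text{M}$ in terms of the single factor $\max\{|\mathbf{a}|h,\nu\}^{\pm1}$, and observing that because $\mathbf{a}$ and $\nu$ are constant and $\mathcal{M}$ is quasi-uniform, all elementwise constants (including $\tau_\text{M}$ and $\gamma_e$) are mesh-independent, so the elementwise estimates sum to a bound with a single $h$-independent constant; Assumption~\ref{ass2} need not be invoked separately, as it is implied by Assumption~\ref{ass5}.
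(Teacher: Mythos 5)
Your proposal is correct and follows essentially the same route as the paper: insert the interpolants $(\mathcal{I}_v\mathbf{u},\mathcal{I}_q p)$ from Assumption \ref{ass4} into Theorem \ref{estimate} and bound the four contributions to $\vertiii{(\mathbf{u}-\mathcal{I}_v\mathbf{u},p-\mathcal{I}_q p)}_+^2$ term by term using the upper bounds on $\tau_\text{M}$ implied by Assumption \ref{ass5}, with the same Cauchy--Schwarz/Young splitting of the advective--pressure term. Your remark that the $\tau_\text{M}^{-1}$ piece additionally requires a matching \emph{lower} bound on $\tau_\text{M}$ is a fair observation that the paper handles only implicitly, by allowing its constant $C_\text{mesh}$ to depend on the function $f$ of Assumption \ref{ass5}.
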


\begin{proof}
By Theorem \ref{estimate}, we have
\begin{equation}
    \vertiii{(\mathbf{u} - \mathbf{u}^h,p - \tilde{p})}^2 \leq 49 \vertiii{(\mathbf{u} - \mathcal{I}_v \mathbf{u},p - \mathcal{I}_q p)}^2_+\text{ .} \label{eq:rates_first}
\end{equation}
However,
\begin{align}
    \vertiii{(\mathbf{u} - \mathcal{I}_v \mathbf{u},p - \mathcal{I}_q p)}^2_+ &= k(\mathbf{\mathbf{u} - \mathcal{I}_v \mathbf{u}},\mathbf{u} - \mathcal{I}_v \mathbf{u}) \nonumber \\
     &\phantom{=} +\left(\mathbf{a}\cdot\nabla \left(\mathbf{u} - \mathcal{I}_v \mathbf{u}\right) + \nabla \left(p - \mathcal{I}_q p\right), \tau_\text{M} (\mathbf{a}\cdot\nabla\left(\mathbf{u} - \mathcal{I}_v \mathbf{u}\right)+\nabla \left(p - \mathcal{I}_q p\right))\right) \nonumber
     \\&\phantom{=} + \left(\nabla\cdot(2\nu\nabla^s\left(\mathbf{u} - \mathcal{I}_v \mathbf{u}\right)),\tau_\text{M}\nabla\cdot(2\nu\nabla^s\left(\mathbf{u} - \mathcal{I}_v \mathbf{u}\right))\right)
     \nonumber \\
     &\phantom{=}+\left(\mathbf{u} - \mathcal{I}_v \mathbf{u},\tau_\text{M}^{-1}\left(\mathbf{u} - \mathcal{I}_v \mathbf{u}\right)\right)\text{ .}
\end{align}
Assumption \ref{ass4} immediately gives
\begin{equation}
    k(\mathbf{\mathbf{u} - \mathcal{I}_v \mathbf{u}},\mathbf{u} - \mathcal{I}_v \mathbf{u}) \leq C_\text{interp} \nu h^{2k_v} | \mathbf{u} |^2_{(H^{k_v+1}(\Omega))^d}\text{ .}
\end{equation}
Assumptions \ref{ass4} and \ref{ass5} give
\begin{equation}
    \left(\nabla\cdot(2\nu\nabla^s\left(\mathbf{u} - \mathcal{I}_v \mathbf{u}\right)),\tau_\text{M}\nabla\cdot(2\nu\nabla^s\left(\mathbf{u} - \mathcal{I}_v \mathbf{u}\right))\right) \leq \frac{C_\text{interp}}{C_\text{inv}} \nu h^{2k_v} | \mathbf{u} |^2_{(H^{k_v+1}(\Omega))^d}
\end{equation}
and
\begin{equation}
    \left(\mathbf{u} - \mathcal{I}_v \mathbf{u},\tau_\text{M}^{-1}\left(\mathbf{u} - \mathcal{I}_v \mathbf{u}\right)\right) \leq C_\text{interp} C_\text{mesh} \max\{ 2, C_\text{inv} \} \max\{ | \mathbf{a} | h, \nu \} h^{2k_v} | \mathbf{u} |^2_{(H^{k_v+1}(\Omega))^d}\text{ ,}
\end{equation}
where $C_\text{mesh} > 0$ is a dimensionless constant that depends on the ratio $\max_e h_e/\min_e h_e$ and the monotone function $f$ appearing in Assumption \ref{ass5}. The Cauchy-Schwarz and Young's inequalities give
\begin{align}
    \left(\mathbf{a}\cdot\nabla \left(\mathbf{u} - \mathcal{I}_v \mathbf{u}\right) + \nabla \left(p - \mathcal{I}_q p\right), \tau_\text{M} (\mathbf{a}\cdot\nabla\left(\mathbf{u} - \mathcal{I}_v \mathbf{u}\right)+\nabla \left(p - \mathcal{I}_q p\right))\right) \leq \phantom{ ,} \nonumber \\ 2 \left( \left(\mathbf{a}\cdot\nabla \left(\mathbf{u} - \mathcal{I}_v \mathbf{u}\right), \tau_\text{M} \mathbf{a}\cdot\nabla \left(\mathbf{u} - \mathcal{I}_v \mathbf{u}\right) \right) + \left( \nabla \left(p - \mathcal{I}_q p\right), \tau_\text{M} \nabla \left(p - \mathcal{I}_q p\right) \right) \right) \text{ ,}
\end{align}
and Assumptions \ref{ass4} and \ref{ass5} give
\begin{equation}
    \left(\mathbf{a}\cdot\nabla \left(\mathbf{u} - \mathcal{I}_v \mathbf{u}\right), \tau_\text{M} \mathbf{a}\cdot\nabla \left(\mathbf{u} - \mathcal{I}_v \mathbf{u}\right) \right) \leq \frac{1}{2} C_\text{interp} | \mathbf{a} | h^{2k_v+1} | \mathbf{u} |^2_{(H^{k_v+1}(\Omega))^d}
\end{equation}
and
\begin{equation}
    \left( \nabla \left(p - \mathcal{I}_q p\right), \tau_\text{M} \nabla \left(p - \mathcal{I}_q p\right) \right) \leq C_\text{interp} \max\left\{ \frac{1}{2}, \frac{1}{C_\text{inv}} \right\} \max\left\{ | \mathbf{a} | h, \nu \right\}^{-1} h^{2k_q+2} | p |^2_{H^{k_q+1}(\Omega)} \text{ .} \label{eq:rates_last}
\end{equation}
The desired result follows from combining \eqref{eq:rates_first}-\eqref{eq:rates_last}.
\end{proof}

In the advection-dominated limit, Theorem \ref{convergence} yields
\begin{equation}
\vertiii{(\mathbf{u} - \mathbf{u}^h,p - \tilde{p})}^2 \sim |\mathbf{a}| h^{2k_v+1} | \mathbf{u} |^2_{(H^{k_v+1}(\Omega))^d} + |\mathbf{a}|^{-1} h^{2k_q+1} | p |^2_{H^{k_q+1}(\Omega)} \text{ .}
\end{equation}
The above equation suggests that one should select the coarse-scale velocity and fine-scale pressure spaces so that $k_v = k_q$ in order to balance the velocity and pressure contributions to the error.  On the other hand, in the diffusion-dominated limit,
Theorem \ref{convergence} yields
\begin{equation}
\vertiii{(\mathbf{u} - \mathbf{u}^h,p - \tilde{p})}^2 \sim \nu h^{2k_v} | \mathbf{u} |^2_{(H^{k_v+1}(\Omega))^d} + \nu^{-1} h^{2k_q+2} | p |^2_{H^{k_q+1}(\Omega)} \text{ .}
\end{equation}
The above equation suggests that one should select the coarse-scale velocity and fine-scale pressure spaces so that $k_v = k_q + 1$ in order to balance the velocity and pressure contributions to the error.  Since the advection-dominated limit is of more practical interest than the diffusion-dominated limit, it is suggested to select the coarse-scale velocity and fine-scale pressure spaces so that $k_v = k_q$.

Theorems \ref{estimate} and \ref{convergence} suggest that, much like the classical SUPG/PSPG method \cite{Franca1992}, our VMS-based formulation is robust with respect to both advection and diffusion.  However, Theorems \ref{estimate} and \ref{convergence} only apply to divergence-conforming discretizations.  To prove analogous results for non-divergence-conforming discretizations requires the use of an inf-sup stability analysis rather than a coercivity stability analysis.  Such a study is relegated to future work.

\section{Numerical results}\label{sec:num-examples}

We finish this paper by applying our VMS-based formulation to the numerical solution of the incompressible Navier-Stokes problem.  We first examine the convergence properties of our formulation using steady and unsteady exact solutions before exploring the applicability of our VMS model as an implicit LES filter using the three-dimensional Taylor-Green vortex problem.  For all of our numerical studies, we employ divergence-conforming B-splines for spatial discretization \cite{Evans2013,Evans:2013fe}.  Isogeometric divergence-conforming discretizations yield pointwise divergence-free velocity fields, so the convergence analysis of Section \ref{sec:oseen-conv} directly applies to such discretizations.

\subsection{Implementation using FEniCS and tIGAr}\label{sec:implementation}

To implement the VMS-based formulation described in Section \ref{sec:formulation} in a transparent way, we employ finite element automation software from the FEniCS Project \cite{Logg2012}, which allows symbolic specification of variational problems in Unified Form Language (UFL) \cite{Alnaes2014}.  These specifications are then compiled \cite{Kirby2006} into efficient routines suitable for high-performance simulations using the solver DOLFIN \cite{Logg:2010}.  FEniCS can be used for isogeometric analysis (IGA) via the library tIGAr \cite{Kamensky2019}.  Some of the complicated variational forms generated by tIGAr benefit from the use of experimental support in FEniCS for and advanced form compiler called TSFC \cite{Homolya2018}.  

We solve the saddle-point problem implied by Problem $(V^h)$ using an iterative strategy based on the iterated penalty method advocated by Morgan and Scott \cite{Morgan2018}.  This is possible since the spatial discretization we consider is divergence-conforming and hence the exact mass conservation constraint is recovered with our formulation.  We extend the iterated penalty method, as in \cite{Kamensky2019}, to perform penalty iterations in tandem with nonlinear iterations.  It is also possible to use the multigrid method proposed in \cite{coley2018geometric} to solve the saddle point problem.


The use of FEniCS leads to concise and comprehensible implementations.  Readers interested in the precise definitions of stabilization parameters, solver configurations, computational performance, results for additional cases, etc. are encouraged to examine, run, and/or modify the open-source code examples accompanying this paper \cite{examples-repo}.  


\subsection{Convergence tests}\label{sec:conv-test}
We first demonstrate numerically that the convergence analysis of Section \ref{sec:oseen-conv} for the Oseen problem can be extrapolated to the steady and unsteady incompressible Navier--Stokes problems.  

\subsubsection{Steady flow: The regularized lid-driven cavity}
We begin our numerical tests with the simplest extension of the Oseen problem, namely, steady Navier--Stokes flow.  We test convergence with a variant of the manufactured solution called the regularized lid-driven cavity, as proposed in \cite{Shih1989} and studied also by \cite{VanOpstal2017}.  In particular, we choose, {\em a priori}, the exact velocity solution from \cite{Shih1989}, 
\begin{align}
    \mathbf{u}(\mathbf{x}) =& \left(x_1^4 - 2x_1^3 + x_1^2\right)\left(4x_2^3 - 2x_2\right)\mathbf{e}_1 - 8\left(4x_1^3 - 6x_1^2 + 2x_1\right)\left(x_2^4 - x_2^2\right)\mathbf{e}_2\text{ ,}
\end{align}
and the pressure\footnote{This differs from \cite{Shih1989}, whose expression for $p$ emerges from complicated manual manipulations.  When manufacturing solutions automatically, one can choose an arbitrary smooth scalar field.}
\begin{equation}
    p(\mathbf{x}) = \sin(\pi x_1)\sin(\pi x_2)\text{ .}
\end{equation}
The corresponding source term is generated automatically from the strong form of the problem, using UFL.  The velocity solution resembles the classic lid-driven cavity benchmark but without the corner singularities.  We apply the exact velocity solution as a Dirichlet boundary condition on the unit square domain.  We take unit length and velocity scales, and we present results for a Reynolds number of $Re=100$.


\begin{figure}[tp!]\centering
\includegraphics[width=0.85\textwidth]{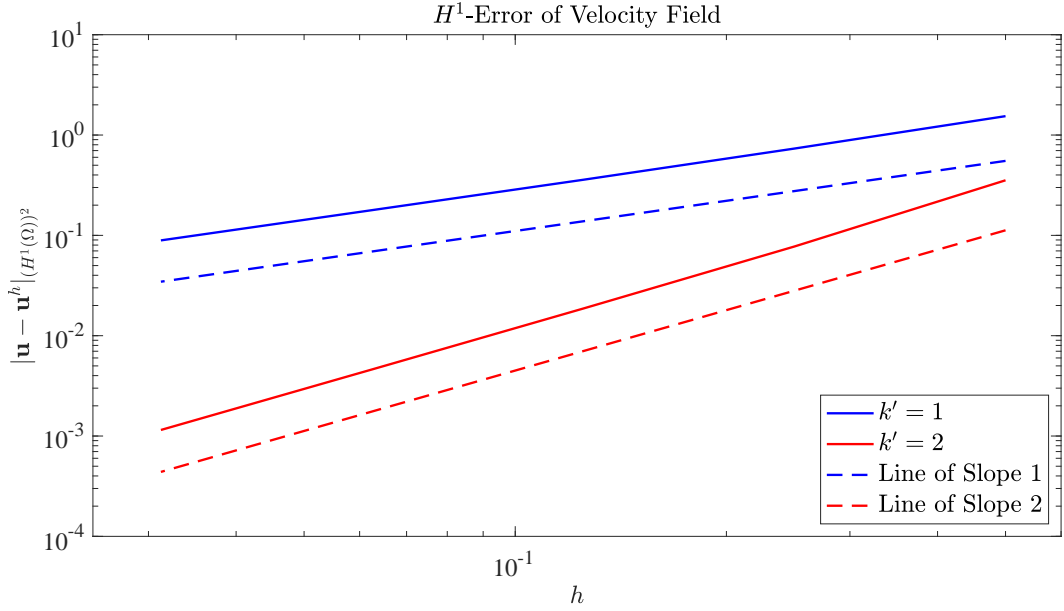}
\caption{Convergence of the $H^1$-semi-norm of the velocity error for the regularized lid-driven cavity using divergence-conforming B-splines of degree $k'=1$ and $k'=2$.}
\label{fig:reg-ldc-dcb-err}
\end{figure}

We test our VMS-based formulation for this problem using divergence-conforming B-splines, selecting the fine-scale pressure space to be equal to the coarse-scale pressure space.  This selection results in the coarse-scale velocity and fine-scale pressure spaces being complete up to the same polynomial degree. The stabilization parameter is selected as
\begin{equation}
     \tau_\text{M} = \left( \mathbf{u}^h \cdot \mathbf{G} \cdot \mathbf{u}^h + C^2_\text{inv} \nu^2 \mathbf{G} : \mathbf{G} \right)^{-1/2} \text{ ,} \label{eq:tau_dyn}
\end{equation}
where $C_\text{inv}$ is the constant associated with Assumption \ref{ass1}, $\mathbf{G} = \frac{\partial \boldsymbol{\xi}}{\partial \mathbf{x}}^T \frac{\partial \boldsymbol{\xi}}{\partial \mathbf{x}}$, and $\frac{\partial \boldsymbol{\xi}}{\partial \mathbf{x}}$ is the inverse Jacobian of the map between the parent element and the physical element.  See \cite{Bazilevs07b} for more details.  Figure \ref{fig:reg-ldc-dcb-err} shows the $H^1$-semi-norm of the velocity error for splines of degree $k'=1$ and $k'=2$.  The notation ``$k'$\,'' is understood in the sense defined by \cite{Evans2011}, i.e., the degree up to which the coarse-scale velocity approximation space is polynomially complete.  Note that optimal convergence rates are observed for both $k'=1$ and $k'=2$.

%
%
%


\begin{remark}\label{rem:bcs}
For our divergence-conforming B-spline computations, we differ from the formulation of \cite{Evans2013} by applying Dirichlet boundary conditions strongly in the tangential direction, via a divergence-free lifting of the boundary data (computed, in this case, as an $L^2$ projection of the exact solution onto the solenoidal subspace of $\mathcal{V}^h$).  This is most consistent with the formulation and convergence analysis in Sections \ref{sec:formulation} and \ref{sec:oseen-conv}, where strong enforcement has been assumed for simplicity.  However, in practice, we recommend the weak enforcement outlined by \cite{Evans2013}, which requires much less boundary-layer resolution to obtain accurate solutions to realistic flow problems.
\end{remark}

\subsubsection{Unsteady flow: Two-dimensional Taylor--Green vortex flow}
We continue our numerical tests with unsteady Navier-Stokes flow.  We test convergence in the unsteady setting using two-dimensional Taylor-Green vortex flow, whose full problem specification and exact solution are given in \cite[Section 9.10.1]{Evans2011}.  Two-dimensional Taylor-Green vortex flow describes the behavior of a two-dimensional vortex who decays away in time due to viscous forces.  We consider both time discretizations of the full semidiscrete unsteady formulation given in Subsection \ref{sec:semi-discrete} as well as its quasi-static-subscale variant described in Subsection \ref{sec:alt-form}.  We discretize in time using the second-order-accurate implicit midpoint rule, maintain a linear proportionality between the time step size and the mesh size, and employ a sufficiently small time step size so that spatial error dominates temporal error.

\begin{figure}[!tp]\centering
\includegraphics[width=0.85\textwidth]{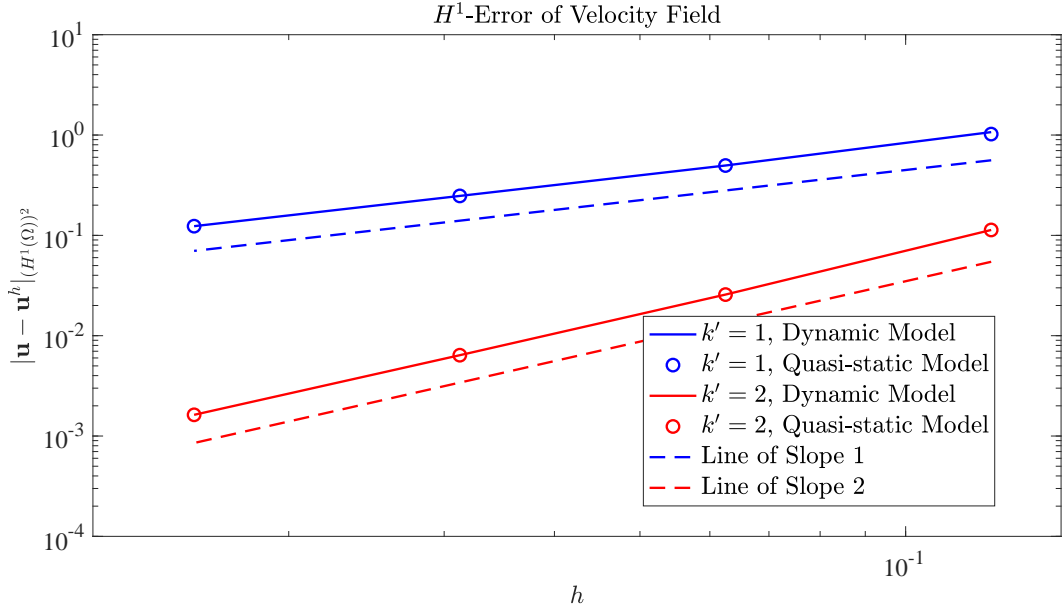}
\caption{Convergence of the $H^1$-semi-norm of the velocity error at time $T = 1$ for two-dimensional Taylor-Green vortex flow using divergence-conforming B-splines of degree $k'=1$ and $k'=2$ with the dynamic and quasi-static subgrid models.}
\label{fig:taylor-green-2d-err}
\end{figure}

We again test our VMS-based formulation using divergence-conforming B-splines, selecting the fine-scale pressure space to be equal to the coarse-scale pressure space.  For the dynamic subscale model,
the stabilization parameter is selected according to \eqref{eq:tau_dyn}.  For the quasi-static subscale model, the stabilization parameter is selected as
\begin{equation}
     \tau_\text{M} = \left( \frac{4}{\Delta t^2} + \mathbf{u}^h \cdot \mathbf{G} \cdot \mathbf{u}^h + C^2_\text{inv} \nu^2 \mathbf{G} : \mathbf{G} \right)^{-1/2} \text{ ,} \label{eq:tau_qs}
\end{equation}
where $C_\text{inv}$ is the constant associated with Assumption \ref{ass1}, $\mathbf{G} = \frac{\partial \boldsymbol{\xi}}{\partial \mathbf{x}}^T \frac{\partial \boldsymbol{\xi}}{\partial \mathbf{x}}$, $\frac{\partial \boldsymbol{\xi}}{\partial \mathbf{x}}$ is the inverse Jacobian of the map between the parent element and the physical element, and $\Delta t$ is the time step size.
This is consistent with other VMS-based methods based on quasi-static models \cite{Bazilevs07b}.  Figure \ref{fig:taylor-green-2d-err} shows the $H^1$-semi-norm of the velocity error at time $T=1$ for splines of degree $k'=1$ and $k'=2$.  Note that, as was the case for the steady manufactured solution, optimal convergence rates are observed for both $k'=1$ and $k'=2$.  Note additionally that the dynamic and quasi-static subscale models yield virtually the same error for both $k' = 1$ and $k' = 2$.

\subsection{Application to turbulent fluid flow}
We finally examine the effectiveness of our VMS-based formulation as a residual-based LES methodology for turbulent fluid flow.  In particular, we apply our formulation to the numerical simulation of three-dimensional Taylor-Green vortex flow at a Reynolds number of $Re = 1600$.  Three-dimensional Taylor-Green vortex flow is one of the simplest systems with which one can study the generation of small scales through vortex stretching and the dissipation of energy from the resulting turbulence.  The initial condition of three-dimensional Taylor-Green vortex flow is laminar and consists of purely two-dimensional streamlines, for all time $t > 0$, the flow is three-dimensional.  As the solution is evolved in time, vortex stretching generates small-scale motion and eventually causes transition to turbulence.  The initial condition for this flow is
\begin{equation}\label{eq:TGV}
{\bf u}_0\left(x,y,z\right) = \left[ \begin{array}{c}
\sin\left(x\right)\cos\left(y\right)\cos\left(z\right) \\
-\sin\left(x\right)\sin\left(y\right)\cos\left(z\right) \\
0 \end{array} \right].
\end{equation}
The flow is periodic in all three spatial directions in the domain $\Omega = \left(0,2\pi\right)^3$ and, due to inherent symmetries in the flow, the flow can be modeled within a computational domain of $\Omega^h = \left(0,\pi\right)^3$ with no-penetration and free-slip boundaries.  At low Reynolds numbers ($Re < 400$), the flow is anisotropic for all time, but as the Reynolds number increases, the flow experiences increased isotropy \cite{Brachet}.

As with the previous two numerical tests, we test our VMS-based formulation using divergence-conforming B-splines, selecting the fine-scale pressure space to be equal to the coarse-scale pressure space.  We employ the quasi-static subscale model, as quasi-static and dynamic subscale models typically return comparable results for VMS-based formulations provided the time step size is not excessively small \cite{colomes2015assessment}.  Also as before, we discretize in time using the second-order-accurate implicit midpoint rule, maintain a linear proportionality between the time step size and the mesh size, and employ a sufficiently small time step size so that spatial error dominates temporal error.  The stabilization parameter is selected according to \eqref{eq:tau_qs}.  We consider the time history of the kinetic energy dissipation rate as a quantity of interest, as is common in LES studies of three-dimensional Taylor-Green vortex flow.  The kinetic energy, $E_k$, of the flow is given by,
\begin{equation}\label{eq:KE}
E_k\left(t\right) = \frac{1}{|\Omega|} \int_\Omega \frac{{\bf u}\left(t\right) \cdot {\bf u}\left(t\right)}{2} d\Omega.
\end{equation}
The kinetic energy dissipation rate, $\epsilon$, can then be computed as
\begin{equation}\label{eq:KEDR}
\epsilon(t) = - \frac{dE_k}{dt}(t).
\end{equation}
For three-dimensional Taylor-Green vortex flow, the kinetic energy dissipation rate is identically the viscous dissipation,
\begin{equation}\label{eq:KEDR_alt}
\epsilon(t) = \int_{\Omega} 2\nu\nabla^s\mathbf{u}(t):\nabla^s\mathbf{u}(t) d\Omega.
\end{equation}
%
For LES, application of \eqref{eq:KEDR_alt} to the resolved velocity field results in the resolved dissipation rate.  This represents the amount of kinetic energy dissipation that is present in the coarse scales.  The difference of the total dissipation rate and the resolved dissipation rate allows for the calculation of the model dissipation rate, which gives a measure of how much dissipation a particular subgrid scale model provides.

\begin{figure}[tp!]\centering
\includegraphics[width=0.48\textwidth]{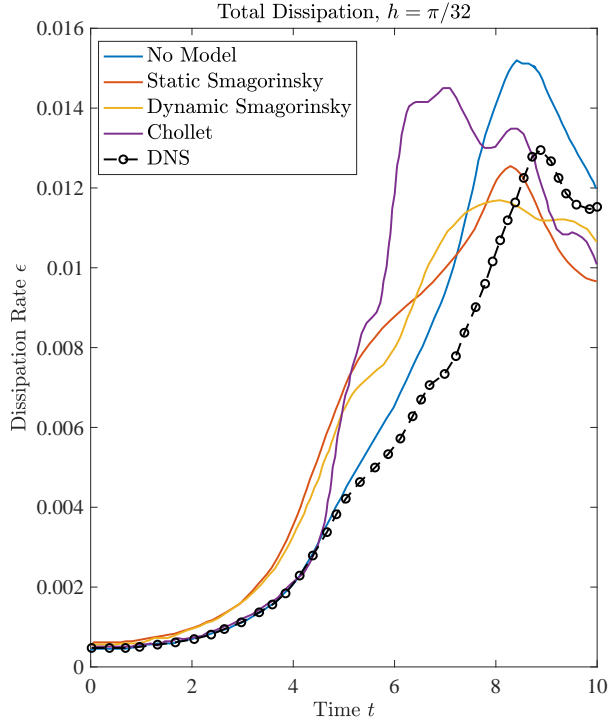}
\caption{Total dissipation rate time history for three-dimensional Taylor-Green vortex flow at $Re$ = 1600 using divergence-conforming B-splines of degree $k' = 2$ and $h = \frac{\pi}{32}$. Comparison of classical LES subgrid models (no model, Static Smagorinsky, Dynamic Smagorinsky, Chollet) with DNS \cite{Brachet}.  Results adopted from \cite{evans2018residual}.}
\label{fig:3d-tgv-classical}
\end{figure}

Before examining results obtained using our VMS-based formulation, we first examine results obtained using classical LES subgrid models \cite{evans2018residual}.  In Fig. \ref{fig:3d-tgv-classical}, we examine the total dissipation rate time history for the case of no model, the static Smagorinsky model \cite{smagorinsky1963general}, the dynamic Smagorinsky model \cite{Germano}, and the Chollet model \cite{Chollet81}, all for a divergence-conforming discretization of degree $k' = 2$ and a mesh of $32^3$ elements.  For this mesh, the flow is moderately unresolved.  This is intentional, as we would like to examine the performance of the models in the unresolved setting.  We compare the results with Direct Numerical Simulation (DNS) \cite{Brachet}.  Note the total dissipation rate time history is quite inaccurate for each of the classical LES subgrid models.  In fact, employing no model leads to better performance than either the static Smagorinsky model, the dynamic Smagorinsky model, or the Chollet model.  The poor performance of the static Smagorinsky model is expected since it is active even at the beginning of the simulation when the solution is purely laminar.  The poor performance of the dynamic Smagorinsky model is more surprising as it was designed to improve upon the static Smagorinsky model when the flow is transitional.  The Chollet model dissipation rate time history matches the DNS closely until a time of approprixmately $t = 4$, at which point the two time histories quickly diverge.  This indicates the model is appropriately ``turned off'' when the mesh is sufficiently fine to resolve the flow field.

\begin{figure}[tp!]\centering
\includegraphics[width=0.48\textwidth]{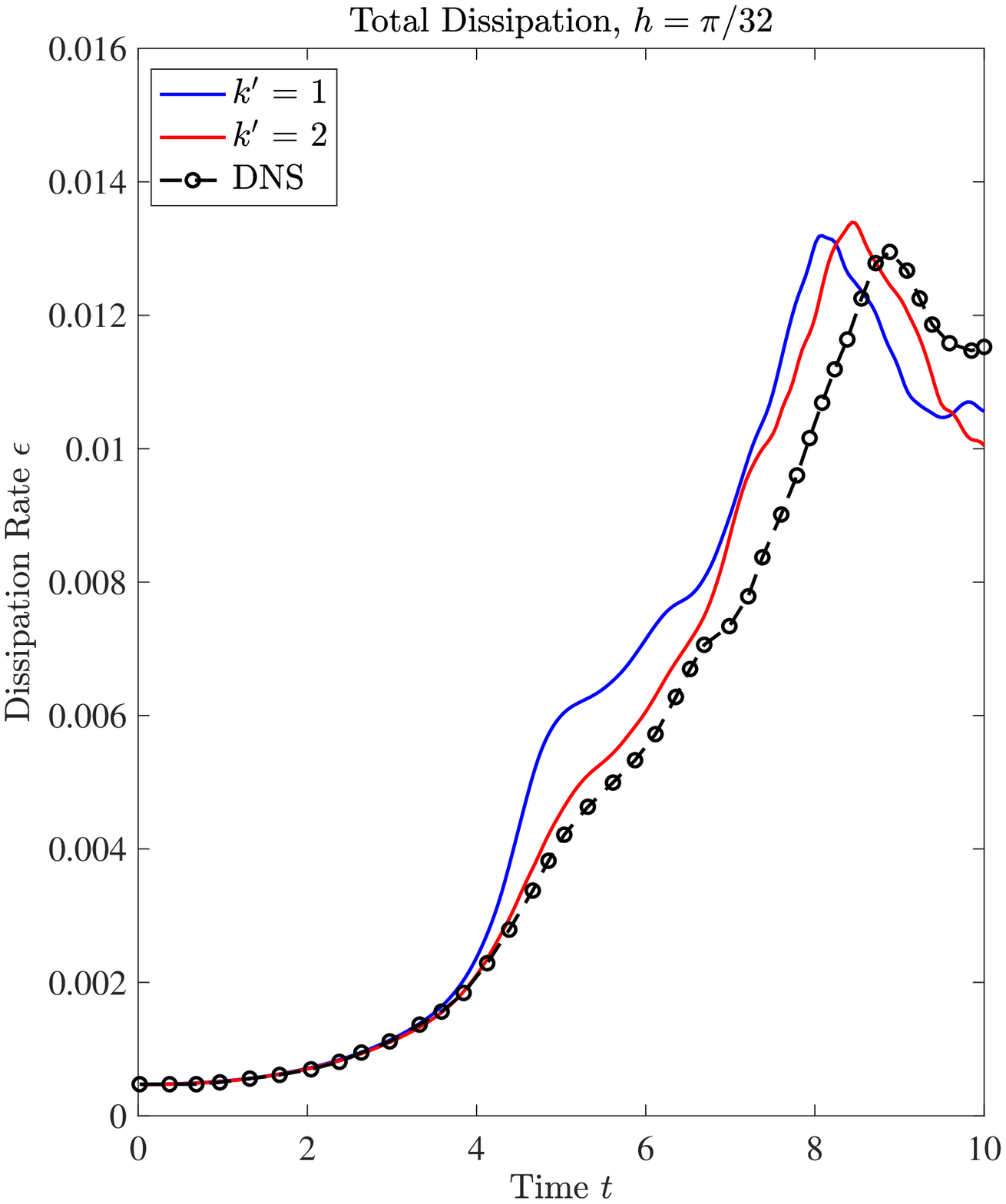}
\includegraphics[width=0.48\textwidth]{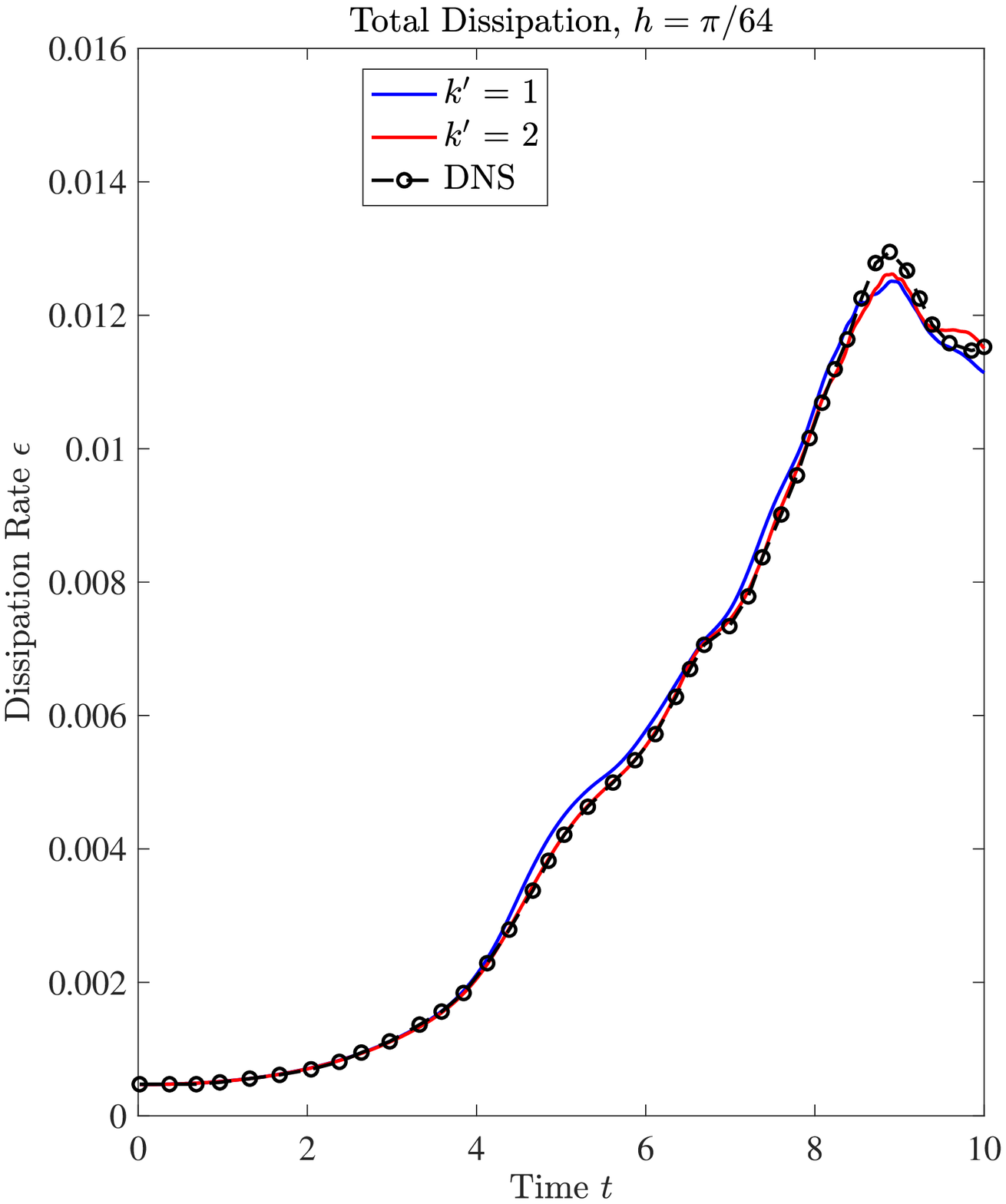}
\caption{Total dissipation rate time history for three-dimensional Taylor-Green vortex flow at $Re$ = 1600 using divergence-conforming B-splines of degree $k' = 1$ and $k' = 2$, $h = \frac{\pi}{32}$ (left) and $h = \frac{\pi}{64}$ (right), and our VMS-based formulation.}
\label{fig:3d-tgv-total}
\end{figure}

We next examine results obtained using our VMS-based formulation.  In Fig. \ref{fig:3d-tgv-total}(left), the total dissipation rate time history is displayed for our VMS-based formulation, divergence-conforming discretizations of degrees $k' = 1$ and $k' = 2$, and a mesh of $32^3$ elements.  From the figure, it is clear that the VMS-based formulation outperforms the static Smagorinsky model, the dynamic Smagorinsky model, and the Chollet model, using both $k ' = 1$ and $k' = 2$.  The VMS-based formulation also outperforms the case of no model.  The VMS-based formulation departs from the DNS at roughly the same point in time for both $k' = 1$ and $k' = 2$, indicating that the implicit subgrid model is appropriately ``turned off'' when the mesh is sufficiently fine to resolve the flow field.  Like the Chollet model, the VMS-formulation model dissipation rate time history begins to diverge from the DNS time history at a time of approximately $t = 4$, though the $k' = 2$ total dissipation rate time history matches the DNS much more closer than the $k' = 1$ total dissipation rate time history to the time of max dissipation rate.

In order to better understand how the VMS-based formulation responds to turbulence production and transition, we examine the resolved and model dissipation rates.  In Fig. \ref{fig:3d-tgv-resolved}(left) and Fig. \ref{fig:3d-tgv-model}(left), the resolved and model dissipation rate time histories are displayed for our VMS-based formulation, divergence-conforming discretizations of degrees $k' = 1$ and $k' = 2$, and a mesh of $32^3$ elements.  From these figures, it is clear that indeed the implicit subgrid model is ``turned off'' until a time of approximately $t = 4$, as the model dissipation rate is nearly zero until this time.  The model dissipation rate then sharply increases for $k' = 1$, and it increases much more gradually for $k' = 2$.  This is consistent with the typical observation that high-order methods exhibit less numerical diffusion than low-order methods.

\begin{figure}[tp!]\centering
\includegraphics[width=0.48\textwidth]{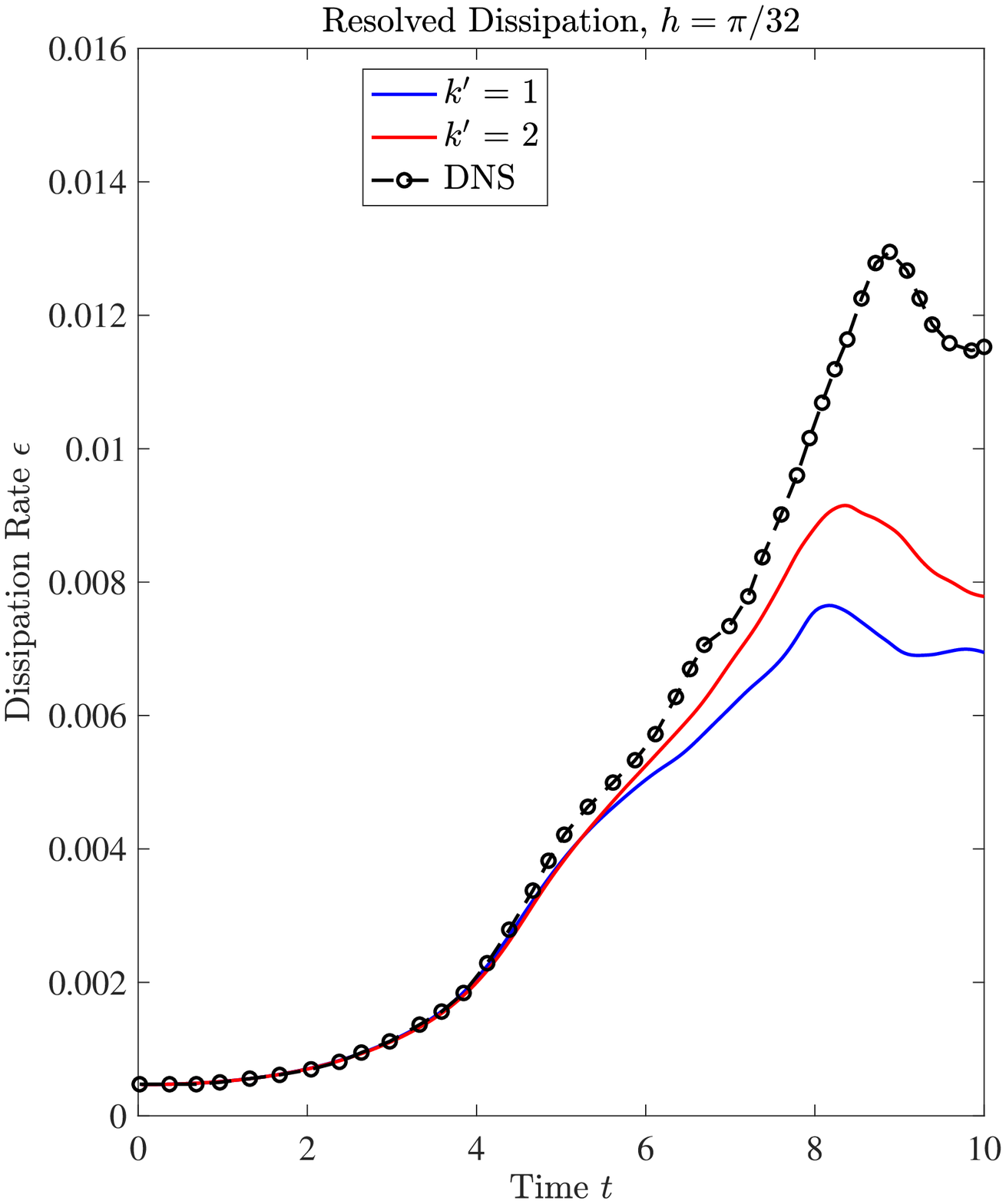}
\includegraphics[width=0.48\textwidth]{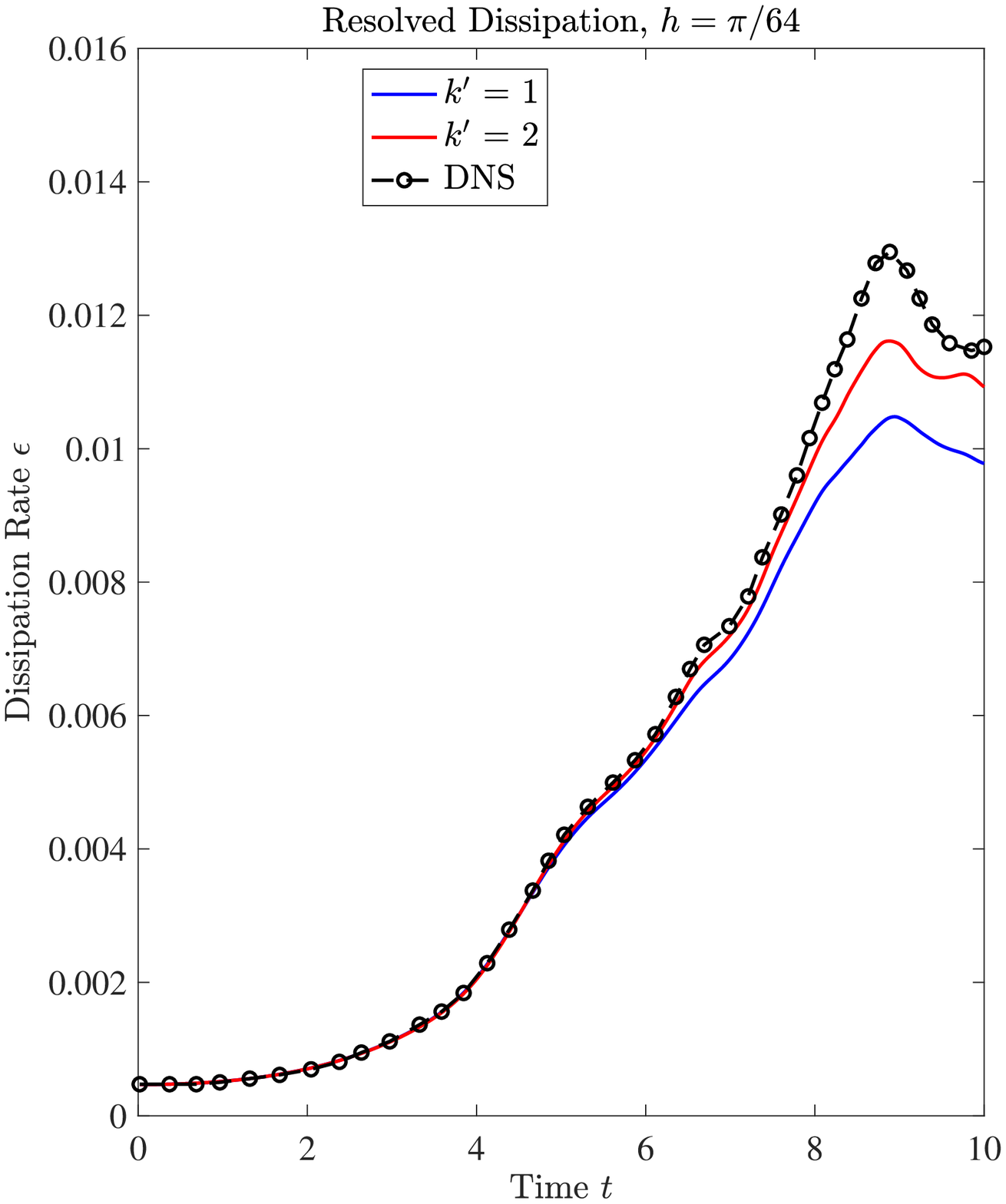}
\caption{Resolved dissipation rate time history for three-dimensional Taylor-Green vortex flow at $Re$ = 1600 using divergence-conforming B-splines of degree $k' = 1$ and $k' = 2$, $h = \frac{\pi}{32}$ (left) and $h = \frac{\pi}{64}$ (right), and our VMS-based formulation.}
\label{fig:3d-tgv-resolved}
\end{figure}

\begin{figure}[tp!]\centering
\includegraphics[width=0.48\textwidth]{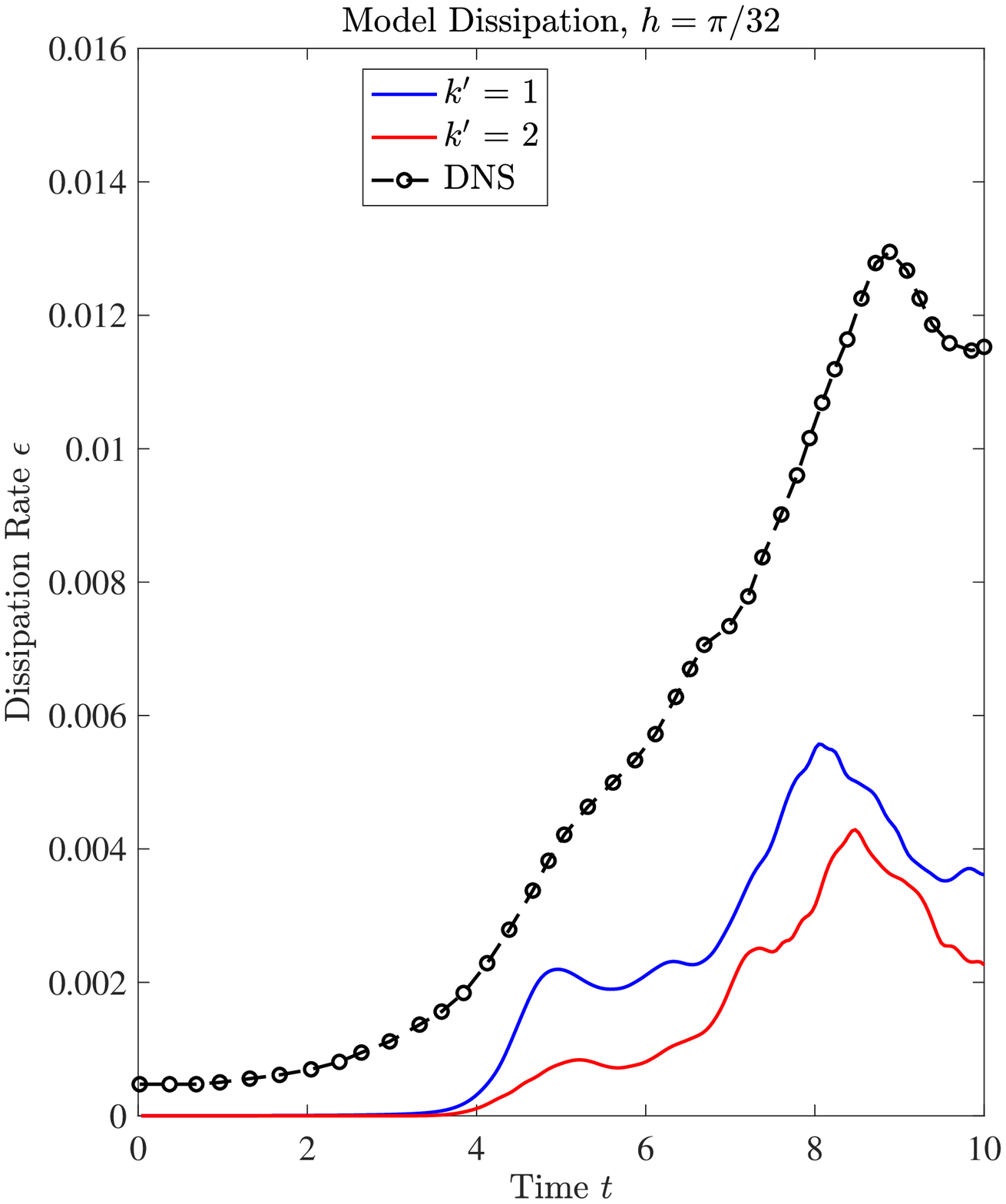}
\includegraphics[width=0.48\textwidth]{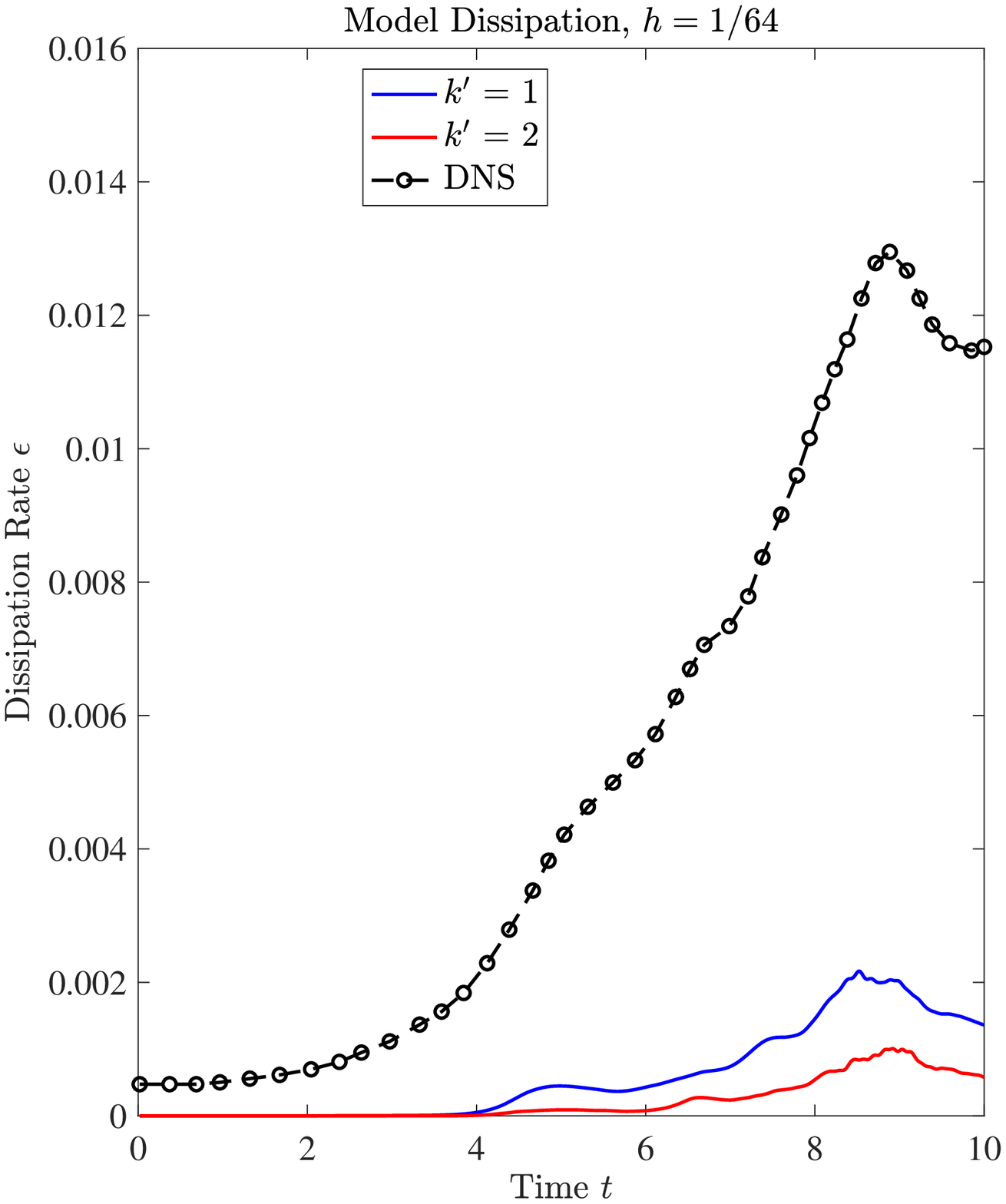}
\caption{Model dissipation rate time history for three-dimensional Taylor-Green vortex flow at $Re$ = 1600 using divergence-conforming B-splines of degree $k' = 1$ and $k' = 2$, $h = \frac{\pi}{32}$ (left) and $h = \frac{\pi}{64}$ (right), and our VMS-based formulation.}
\label{fig:3d-tgv-model}
\end{figure}

We next examine the impact of mesh refinement on our VMS-based formulation.  In Fig. \ref{fig:3d-tgv-total}(right), Fig. \ref{fig:3d-tgv-resolved}(right), and Fig. \ref{fig:3d-tgv-model}(right), the total, resolved, and model dissipation rate time histories are displayed for our VMS-based formulation, divergence-conforming discretizations of degrees $k' = 1$ and $k' = 2$, and a mesh of $64^3$ elements.  For this mesh, the flow is marginally unresolved.  It is clear here that as the mesh is refined, the total dissipation rate for both $k' = 1$ and $k' = 2$ converge to the DNS solution.  This is accompanied by an appropriate increase in the resolved dissipation rate as well as a corresponding decrease in the model dissipation rate.

\begin{figure}[t!]\centering
\includegraphics[width=0.48\textwidth]{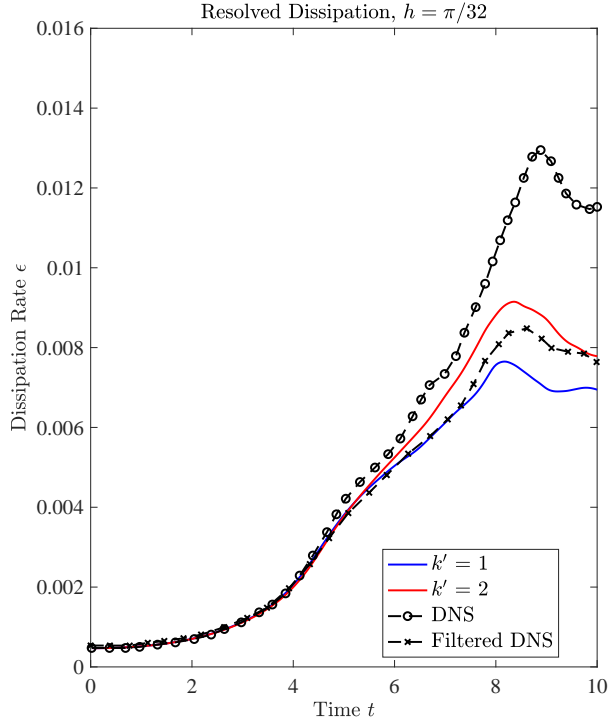}
\caption{Time history of resolved dissipation rate for simulation of the Taylor-Green vortex at $Re$ = 1600 with $k' = 1$ and $k' = 2$ and $h = \frac{\pi}{32}$.  Comparison of our VMS-based formulation with filtered DNS data \cite{Fauconnier08}.}
\label{fig:3d-tgv-comparison}
\end{figure}

Up to this point, we have compared simulation results directly with DNS data.  However, for unresolved simulations, such as the ones reported here for a mesh of $32^3$ elements, the energy in the coarse-scale velocity field (as defined by the projection of the exact velocity field into the coarse-scale space) is expected to be significantly less than the energy in the total velocity field.  As such, rather than compare the time history of the total dissipation rate for a given model with that of a DNS, it is likely more appropriate to compare the time history of the resolved dissipation rate with that associated with filtered or projected DNS data.  With this in mind, the time histories of the resolved dissipation rate for our VMS-based formulation for $k' = 1$ and $k' = 2$ and a mesh size of $h = \frac{\pi}{32}$ are displayed in Fig. \ref{fig:3d-tgv-comparison} alongside the time history of the dissipation rate computed from filtered DNS data \cite{Fauconnier08}.  The filtered DNS data was obtained using a sharp cut-off filter of size $\Delta = \frac{\pi}{32}$.  From the figure, we see that the resolved dissipation rate time histories associated with our VMS-based formulation closely match the resolved dissipation rate time history computed from the filtered DNS data.  This is evidence that our VMS-based formulation might be providing the correct amount of dissipation to yield the correct coarse-scale velocity field.  However, it should be noted that the filtered DNS data does not correspond to the projection of the exact velocity field into the coarse-scale space for either $k' = 1$ or $k' = 2$ but rather the projection of the exact velocity field into a space of Fourier modes, so no precise conclusions should be drawn.


\section{Conclusions}\label{sec:conclusions}

In this paper, we have introduced a novel residual-based stabilized formulation for inf-sup stable discretizations of incompressible Navier-Stokes flow with $H^1$-conforming pressure approximation.  The method is formally derived using the VMS formalism using a Stokes projector for scale separation.  This yields a nonlinear algebraic system in which a discrete fine-scale pressure field must be solved for alongside the coarse-scale unknowns, such that the coarse- and fine-scale velocities are both discretely divergence-free.  The new formulation is energetically stable provided a dynamic subscale model is employed, and the formulation is quasi-optimal with respect to an SUPG/PSPG-like norm for divergence-conforming discretizations of a linearized model problem (Oseen flow).  Numerical results both support the analysis and provide initial evidence that the new formulation is capable of modeling the influence of unresolved flow features.  In the future, we plan to conduct a more thorough investigation of the turbulence modeling capabilities of the formulation and to study its interaction with weakly-enforced no-slip boundary conditions, on both conforming and immersed boundaries.  We also plan to extend the stability and convergence results presented here to non-divergence-conforming discretizations.

A limitation of the proposed stabilization is that it interferes with another desirable property of the Galerkin method using divergence-conforming spaces: pressure-robustness \cite{John2017}.  Briefly, pressure-robustness means that the error in the discrete velocity solution does not depend on the interpolation error of the pressure.  Lack of pressure robustness often manifests as poor mass conservation around large pressure gradients induced by irrotational source terms, such as those corresponding to immersed boundaries \cite{Kamensky2017a,Casquero2018}.  However, if the main practical advantage of pressure robustness is its prevention of un-physical mass loss, is it still an important property for divergence-conforming discretizations?  Does pressure interpolation error pollute velocity solutions in other important ways?  We plan to study these questions in future work, by applying the method of this paper in conjunction with immersed boundaries.

Finally, with the express goal of obtaining a residual-based stabilized formulation that maintains both strong mass conservation and pressure robustness for divergence-conforming spaces, we plan to apply the formalism presented here to the construction of VMS-based formulations yielding pointwise divergence-free coarse- and fine-scale velocities.  While such formulations have been proposed in the past \cite{VanOpstal2017,evans2018residual}, an error analysis which shows robustness in the advection-dominated limit is missing for these formulations.  The formalism here, however, may yield pressure-robust methodologies that are provably robust with respect to both advection and diffusion.

\section*{Acknowledgements}
%
%
We thank the Texas Advanced Computing Center (TACC) at the University of Texas at Austin for providing HPC resources that contributed to this research.  FEniCS and tIGAr were run on TACC resources using Singularity \cite{Kurtzer2017} images converted from Docker \cite{Merkel2014} containers \cite{Hale2017}.

\bibliographystyle{unsrt}
\bibliographystyle{plain}
\bibliography{main}

\end{document}